\newtheorem{theorem}{Theorem}[section]
\newtheorem*{theorem*}{Theorem}
\newtheorem{lemma}[theorem]{Lemma}
\newtheorem*{protoquestion*}{Prototype question}
\theoremstyle{definition}
\newtheorem{definition}[theorem]{Definition}
\theoremstyle{remark}
\newtheorem{remark}[theorem]{Remark}
\newtheorem*{remarks*}{Remarks}
\newtheorem{example}[theorem]{Example}
\numberwithin{equation}{section}
\DeclareMathOperator{\diam}{diam}
\DeclareMathOperator{\rk}{rk}
\DeclareMathOperator{\dist}{dist}
\DeclareMathOperator{\UW}{UW}
\DeclareMathOperator{\tc}{tc}
\DeclareMathOperator{\W}{W}
\DeclareMathOperator{\Z}{\mathcal{Z}}
\DeclareMathOperator{\Sig}{\mathfrak{S}}
\newcommand{\eps}{\varepsilon}
\begin{document}

\title{Waist of maps measured via Urysohn width}

\author{Alexey~Balitskiy{$^\clubsuit$}}

\email{{$^\clubsuit$}balitski@mit.edu}

\author{Aleksandr~Berdnikov{$^\spadesuit$}}

\email{{$^\spadesuit$}aberdnik@mit.edu}

\address{{$^\clubsuit$}{$^\spadesuit$} Dept. of Mathematics, Massachusetts Institute of Technology, 182 Memorial Dr., Cambridge, MA 02142, USA}
\address{{$^\clubsuit$} Institute for Information Transmission Problems RAS, Bolshoy Karetny per. 19, Moscow, Russia 127994}

%\thanks{The author is supported by the Russian Foundation for Basic Research Grant 18-01-00036.}

%\subjclass[2010]{52A20, 52A23, 53D35}
%\keywords{Billiards, Minkowski norm, Viterbo's conjecture, Permutohedron.}

\begin{abstract}
We discuss various questions of the following kind: for a continuous map $X \to Y$ from a compact metric space to a simplicial complex, can one guarantee the existence of a fiber large in the sense of Urysohn width? The $d$-width measures how well a space can be approximated by a $d$-dimensional complex.
%A continuous map $f: S^3 \to Y^1$ from the unit $3$-sphere to a graph might have all fibers small in the sense of the Urysohn $1$-width, but under additional topological complexity assumptions on fibers there will be a fiber of large width.
The results of this paper include the following.
\begin{enumerate}
  \item Any piecewise linear map $f: [0,1]^{m+2} \to Y^m$ from the unit euclidean $(m+2)$-cube to an $m$-polyhedron must have a fiber of $1$-width at least $\frac{1}{2\beta m +m^2 + m + 1}$, where $\beta = \sup\limits_{y\in Y} \rk H_1(f^{-1}(y))$ measures the topological complexity of the map.
  \item There exists a piecewise smooth map $X^{3m+1} \to \mathbb{R}^m$, with $X$ a riemannian $(3m+1)$-manifold of large $3m$-width, and with all fibers being topological $(2m+1)$-balls of arbitrarily small $(m+1)$-width.
\end{enumerate}
\end{abstract}

\maketitle

\setcounter{section}{-1}
\section{Introduction}
\label{sec:intro}

The notion of the \emph{Urysohn width} of a compact metric space was suggested by Pavel Urysohn in 1920s (and published posthumously by Pavel~Alexandrov~\cite{alexandroff1926notes}). The $d$-width measures how well a space can be approximated by a $d$-dimensional simplicial complex. A compact metric space $X$ is said to have \emph{$d$-width} at most $w$, if there is a continuous map $X \to Z^d$ to a $d$-dimensional simplicial complex with all fibers having diameter at most $w$. The original definition of Urysohn was given in terms of closed coverings, and we give an overview of different equivalent ways of defining width in Section~\ref{sec:urysohn}.

The Urysohn width of a riemannian manifold is related to other metric invariants. For example, the codimension $1$ width does not exceed the $n\textsuperscript{th}$ root of the volume (up to a dimensional factor; see~\cite{guth2017volumes}), and bounds from above the filling radius of a manifold (see~\cite[Appendix~1]{gromov1983filling}) and its hypersphericity (see~\cite[Proposition~F$_1$]{gromov1988width}~or~\cite[Section~5]{guth2005lipshitz}). Among the applications of the Urysohn width we mention a recent transparent proof~\cite{nabutovsky2019linear} of Gromov's systolic inequality, building on the ideas from~\cite{papasoglu2020uryson, guth2010systolic}.

The question raised in this paper is inspired by another famous Gromov's inequality, namely the waist of the sphere theorem~\cite{gromov2003isoperimetry}. It says that any generic smooth map $f: S^n \to \mathbb{R}^m$, $m<n$, has a fiber of $(n-m)$-volume at least the one of the $(n-m)$-dimensional ``equatorial'' subsphere. The target space can be replaced by any $m$-manifold~\cite{karasev2013waist}, while it is not clear if one can replace it by an $m$-polyhedron $Y^m$. The only result in this direction we are aware of is~\cite[Theorem~7.3]{akopyan2012borsuk}, saying that any generic smooth map $S^n \to Y^{n-1}$ has a fiber of length $\ge \pi$. A non-sharp version of the waist theorem, however, can be proved for any $m$-dimensional target space by induction using the Federer--Fleming isoperimetric estimate. This type of argument apparently goes back to Almgren, and it was used by Gromov in~\cite{gromov1983filling} (see the exposition in~\cite[Section~7]{guth2008waist}, which applies to any target space, or in~\cite[Section~7]{akopyan2019lower}). A discrete version of this non-sharp estimate is proven in~\cite{matdinov2013size} along the same lines. For Riemannian metrics other than round, the case $n=2$ is understood~\cite{liokumovich2014slicing, balacheff2015measurements}, and the case $n=3$ is investigated under additional curvature assumptions~\cite{liokumovich2020waist}.

The Urysohn width itself is a waist-type invariant, in which the size of a fiber is measured via its diameter, instead of the volume. In this paper, we investigate (non-sharp) waist theorems, where the size of a fiber is measured via the Urysohn width.

\begin{protoquestion*}
\label{ques:waist}
Fix integers $n,m,d$, such that $n > m + d$. Let $f: X^n \to Y^m$ be a continuous map from a compact riemannian $n$-manifold to an $m$-dimensional simplicial complex. Let $w$ be the supremal Urysohn $d$-width of fibers $f^{-1}(y)$, $y \in Y$, viewed as compact metric spaces with the extrinsic metric of $X$. Can one bound $w$ from below in terms of the $(n-1)$-width of $X$? If not, can one bound $w$ if the ``topological complexity'' of the fibers is restricted?
\end{protoquestion*}

It is natural to expect that the answer should be affirmative in some sense when $n > m + d$ (if we hope that the corresponding property of the dimension~\cite{hurewicz1941dimension, balka2017dimensions} is robust). When $d=1$, and the first Betty number of the fibers is bounded, this is indeed the case, as we will show in Section~\ref{sec:boundedfibers}. However, in general this is far from true. In Section~\ref{sec:sasha} it will be shown that even for $n = (m+1)(d-m)+2m$ and topologically trivial fibers the answer is negative. In a sense, this shows the failure of the notion of the $d$-width to measure the ``defect of $d$-dimensionality''.

Let us describe the answers for the first four non-trivial cases of Prototype question. These four claims are the simplest special cases of the theorems explained in this paper.

\begin{enumerate}[label=(\Alph*)]
  \item\label{cl:A} \textit{There is a map $f: [0,1]^3 \to [0,1]$ with all fibers having arbitrarily small $1$-width.}

  We describe this example (\cite[Example~H$_1''$]{gromov1988width}) briefly. Consider an $\eps$-fine cubical grid in $\mathbb{R}^3$, and let $Z_0$ be its $1$-skeleton. Let $Z_1$ be the $1$-skeleton of the dual grid. Define $f$ by setting $f(x) = \frac{\dist(x,Z_0)}{\dist(x,Z_0) + \dist(x,Z_1)}$. It can be checked that every fiber $\Sigma_y = f^{-1}(y)$, $y \in [0,1)$, retracts to $Z_0$ with every point moving by distance $\lesssim \eps$; hence it has small $1$-width. Similarly, the fibers over $y \in (0,1]$ are approximated by $Z_1$.

  In Section~\ref{sec:arbitraryfibers}, we explain how this example is generalized to higher dimensions, see Theorem~\ref{thm:waistunconstrainedsharp}. This might be known to experts, but we were not able to locate a reference.

  \item\label{cl:B} Notice that all regular fibers in the previous example have high genus. What happens if we bound their topological complexity?
      %Suppose a piecewise smooth map $f: [0,1]^3 \to [0,1]$ is such that every fiber $f^{-1}(y), y \in [0,1]$, has the first Betty number at most $10$. Then there is a fiber $f^{-1}(y)$ of Urysohn $1$-width at least $\frac{1}{23}$.

\textit{Suppose that a piecewise linear map $f: [0,1]^3 \to [0,1]$ is such that all fibers $f^{-1}(y), y \in [0,1]$, are homeomorphic to $[0,1]^2$. Then there is a fiber $f^{-1}(y)$ of Urysohn $1$-width at least $\frac{1}{3}$.}

This is the baby case of one of our main results, Theorem~\ref{thm:waist}. Here is the idea of the proof that will be developed in Section~\ref{sec:boundedfibers}. Suppose that every fiber $X_y = f^{-1}(y)$ has width $\UW_{d}(X_y) < c$. So there are maps $X_y \to Z_y$ to graphs $Z_y$ whose fibers are of diameter less than $c$. A na\"{\i}ve idea might be to assemble them together to get a map $[0,1]^3 \to \bigcup Z_y$. If there was a nice way to interpret $\bigcup Z_y$ as a two-dimensional space, then we would be done as long as $c < \UW_{n-1}(X)$. A careful argument might try to assemble the maps $X_y \to Z_y$ by induction on the skeletal structure of $Y$, subdivided finely. The newly built intermediate maps will have fibers with the size bounded in terms of $c$ and the ``topological complexity'' of the $X_y$.

  \item\label{cl:C} The following is a special case of~\cite[Corollary~H$_1'$]{gromov1988width}, which we discuss in Section~\ref{sec:arbitraryfibers} (see Theorem~\ref{thm:waistunconstrained}).

        \textit{Every continuous map $f: X^4 \to Y^1$ from a compact metric space to a graph has a fiber whose $1$-width is at least the $3$-width of $X$.}
  \item\label{cl:D} Another major result of this paper is Theorem~\ref{thm:bundle}, a family of examples of maps with small and topologically trivial fibers; here is the simplest case.

      %\textit{There is a map $f: [0,1]^4 \to [0,1]$ with all fibers being topological $3$-balls and having arbitrarily small $2$-width.}
      \textit{There is a riemannian metric on $[0,1]^4$ that has substantial $3$-width but the fibers of the coordinate projection $f : [0,1]^4 \to [0,1]$ all have small $2$-width.}

      We sketch roughly the idea of the construction. For each $y \in [0,1]$, the standard metric inside the fiber $f^{-1}(y) \simeq [0,1]^3$ is modified as follows. Consider the high-genus surface $\Sigma_y \subset f^{-1}(y)$, as in the example~\ref{cl:A}. In its small tubular neighborhood, blow up the metric in the normal direction; then, squeeze the metric everywhere outside the tubular neighborhood. The result can be mapped to the suspension of $Z_0$ or $Z_1$ with small fibers. However, the entire space $[0,1]^4$ can be shown to have substantial $3$-width.
\end{enumerate}

It is worth noting that the theorems explained in this paper do not answer Prototype question completely. For example, imagine one has a map $X^{k^2} \to Y^{k-1}$ with topologically trivial fibers ($k \ge 3$), and with $\UW_{k^2-1}(X^{k^2}) = 1$. Theorem~\ref{thm:bundle} implies that it might happen that all the fibers have $(2k-1)$-width arbitrarily small. Theorem~\ref{thm:waistunconstrained} implies that there is a fiber $F$ with $\UW_{k-1}(F) \ge 1$. It is not clear though what can be said about the widths $\UW_d$ of the fibers in the range $k-1 < d < 2k-1$.

\subsection*{Acknowledgements} We thank Larry Guth for helpful discussions. This paper arose from a chapter of the thesis defended by the first-named author under the supervision of Larry Guth at MIT.

\section{Urysohn width}
\label{sec:urysohn}

Everywhere in this section, $X$ denotes a compact metric space. The diameter of a set is measured using the distance function in $X$: $\diam A = \sup\limits_{a,a' \in A} \dist_X(a,a')$.

\begin{definition}
\label{def:urysohn}
The \emph{Urysohn $d$-width} of a closed subset $S$ of a compact metric space $X$ can be defined in either of the following ways.
\begin{equation*}
\label{eq:uo}
\UW_d(S) = \inf\limits_{\bigcup U_i \supset S} \sup\limits_{i} \diam(U_i), \tag{UO}
\end{equation*}
where the infimum is taken over all open covers of $S$ of multiplicity at most $d+1$.
\begin{equation*}
\label{eq:uc}
\UW_d(S) = \inf\limits_{\bigcup C_i = S} \sup\limits_{i} \diam(C_i), \tag{UC}
\end{equation*}
where the infimum is taken over all finite closed covers of $S$ of multiplicity at most $d+1$.
\begin{equation*}
\label{eq:um}
\UW_d(S) = \inf\limits_{p: S \to Z} \sup\limits_{z \in Z} \diam(p^{-1}(z)), \tag{UM}
\end{equation*}
where the infimum is taken over all continuous maps $p$ from $S$ to any metrizable topological space $Z$ of covering dimension at most $d$.

The quantity $\W(p) = \sup\limits_{z \in Z} \diam(p^{-1}(z))$ will be called the \emph{width} of the map $p$.
\end{definition}

%\begin{remarks*}$ $\newline
%\begin{enumerate}
%  \item The original definition by P.~Urysohn was given in 1920s (and published much later by P.~Alexandroff~\cite{urysohn1951works}) in terms of closed coverings, as in~\eqref{eq:uc}.
%  \item If $X$ is not compact, one should use $\inf\limits_{\text{open }V \ni z} \diam(p^{-1}(V))$ instead of $\diam(p^{-1}(z))$, otherwise it does not match the definition in terms of coverings (see~\cite{gromov1988width}).
%  \item The class of test spaces $Z$ in~\eqref{eq:um} can be narrowed down to $d$-dimensional simplicial complexes, without changing the width. On the other hand, it can be widened to $d$-dimensional topological spaces with the Hausdorff separation axiom, as the equality $\inf\limits_{\text{open }V \ni z} \diam(p^{-1}(V)) = \diam(p^{-1}(z))$ still holds in this case.
%\end{enumerate}
%\end{remarks*}

The class of test spaces $Z$ in~\eqref{eq:um} can be narrowed down to $d$-dimensional simplicial complexes, without changing the width, as it will implicitly follow from the proof below.

\begin{proof}[Proof of the equivalence of different definitions of the Urysohn width]$ $\newline
Denote by $w_\text{c}, w_\text{o}, w_\text{m}$ the width of a set $S \subset X$ measured as in~\eqref{eq:uc},~\eqref{eq:uo},~\eqref{eq:um}, respectively.
\begin{itemize}
\item[(\ref{eq:uo} $\le$ \ref{eq:uc})]
%Given a finite closed covering $S = \bigcup C_i$, we can use compactness to argue that
%\[
%\delta = \min\limits_{C_i \cap C_j = \varnothing} \dist(C_i, C_j) > 0.
%\]
%Take $0 < \eps < \delta$, and consider the open covering $\{U_i\}$, where $U_i$ is the $\eps$-neighborhood of $C_i$. It has the same multiplicity as the covering $\{C_i\}$, and $\max \diam U_i \le \max \diam C_i + 2\eps$. Taking $\eps \to 0$, we get $w_\text{o} \le \sup \diam C_i$. Therefore, $w_\text{o} \le w_\text{c}$.
Given a finite closed covering $S = \bigcup C_i$, we can use compactness to argue that
\[
\delta_{i_0, \ldots, i_{d+1}} = \min\limits_{x\in X} \max_{0\le k\le d+1} \dist(x, C_{i_k})
\]
is attained and positive.
%\[
%\delta = \min\limits_{C_i \cap C_j = \varnothing} \dist(C_i, C_j) > 0.
%\]
Take $\eps > 0$ smaller then each $\delta_{i_0, \ldots, i_{d+1}}$ over all collections of indices $i_0 < \ldots < i_{d+1}$, and consider the open covering $\{U_i\}$, where $U_i = U_\eps(C_i)$ is the $\eps$-neighborhood of $C_i$. It has the same multiplicity as the covering $\{C_i\}$, and $\max \diam U_i \le \max \diam C_i + 2\eps$. Taking $\eps \to 0$, we get $w_\text{o} \le \max \diam C_i$. Therefore, $w_\text{o} \le w_\text{c}$.
\item[(\ref{eq:uc} $\le$ \ref{eq:um})]
Suppose we are given a map $p: S \to Z^d$ to a metrizable space; fix a metric on $Z$. Recall that the width of $p$ is defined as $\W(p) = \sup_{z \in Z} \diam (p^{-1}(z))$. Fix a small number $\eps > 0$. For each point $z \in p(S)$ one can find radius $r(z) > 0$ such that the preimage of $V_{r(z)}(z)$, the $r(z)$-neighborhood of $z$, has diameter smaller than $\W(p) + \eps$. Here we used
\[
\lim\limits_{r \to 0} \diam(p^{-1}(V_{r}(z))) = \diam(p^{-1}(z)).
\]
By definition of dimension (and compactness), there is a finite open covering $\{V_i\}$ of $p(S)$, refining $\{V_{r(z)}(z)\}$, and with multiplicity at most $d+1$. It follows from Lebesgue's number lemma that there is a closed covering $\{D_i\}$ with $D_i \subset V_i$. Then the closed sets $C_i = p^{-1}(D_i)$ have diameter less than $\W(p) + \eps$, and cover $S$ with multiplicity at most $d+1$. Repeating this with arbitrarily small $\eps$, one gets $w_\text{c} \le \W(p)$. Since this is true for all $p$, we conclude $w_\text{c} \le w_\text{m}$.
\item[(\ref{eq:um} $\le$ \ref{eq:uo})]
Given an open covering $S \subset \bigcup U_i$ (which we can assume finite by compactness) with multiplicity $d+1$, consider the mapping to its nerve
\[
\varphi : S \to N^d,
\]
associated to any subordinate partition of unity. The preimage of every point is entirely contained in some $U_i$, hence $\W(\varphi) \le \sup \diam U_i$. Therefore, $w_\text{m} \le w_\text{o}$.
\end{itemize}
\end{proof}

Definition~\ref{def:urysohn} was given for a closed set $S$. We adopt the following convention: the width of a (not necessarily closed) set $S \subset X$ is defined in terms of open coverings,~\eqref{eq:uo}.

\begin{lemma}
\label{lem:semicontinuous}
Let $f : X \to Y$ be a continuous map from a compact metric space $X$ to a metrizable topological space $Y$. The function
\[
y \mapsto \UW_d(f^{-1}(y))
\]
%\begin{align*}
%  w : Y &\to \mathbb{R} \\
%  y &\mapsto \UW_d(f^{-1}(y))
%\end{align*}
is upper semi-continuous for any $d$. Namely,
\[
\UW_d(f^{-1}(y)) \ge \limsup_{y' \to y} \UW_d(f^{-1}(y')).
\]
\end{lemma}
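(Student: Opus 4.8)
The plan is to prove the equivalent statement that for every real number $w > \UW_d(f^{-1}(y))$ there is a neighborhood $V$ of $y$ in $Y$ with $\UW_d(f^{-1}(y')) < w$ for all $y' \in V$; letting $w$ decrease to $\UW_d(f^{-1}(y))$ then gives $\limsup_{y'\to y}\UW_d(f^{-1}(y')) \le \UW_d(f^{-1}(y))$, which is the assertion.

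Fix such a $w$. By the open-cover definition~\eqref{eq:uo} of the Urysohn width, there is an open cover $\{U_i\}$ of the closed set $f^{-1}(y)$ by open subsets of $X$, of multiplicity at most $d+1$, with $\sup_i \diam U_i < w$; since $f^{-1}(y)$ is compact, we may pass to a finite subcover, which still has multiplicity at most $d+1$ and the same diameter bound. The key step is then a tube-lemma-type argument: the set $K = X \setminus \bigcup_i U_i$ is closed in the compact space $X$, hence compact, so $f(K)$ is compact and therefore closed in the metrizable (hence Hausdorff) space $Y$. Since $f^{-1}(y) \subset \bigcup_i U_i$ we have $y \notin f(K)$, so $V := Y \setminus f(K)$ is an open neighborhood of $y$, and by construction $f^{-1}(V) \subset \bigcup_i U_i$.

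Finally, for any $y' \in V$ the sets $U_i$ still cover $f^{-1}(y') \subset f^{-1}(V)$, with multiplicity (as a cover of that fiber) at most $d+1$ and all diameters $< w$; hence $\UW_d(f^{-1}(y')) < w$, completing the argument. I do not expect a genuine obstacle here: the only delicate point is the passage from ``$f^{-1}(y)$ is covered'' to ``$f^{-1}(y')$ is covered for all $y'$ near $y$,'' and this is exactly where compactness of $X$ (so that $K$ is compact and $f(K)$ closed) is essential — for a merely locally compact $X$ this step can fail, and indeed so can upper semicontinuity.
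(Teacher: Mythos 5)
Your proof is correct and follows essentially the same route as the paper: the paper's one-line argument simply asserts that an open cover of $f^{-1}(y)$ of multiplicity $\le d+1$ and diameters $< \UW_d(f^{-1}(y))+\eps$ also covers nearby fibers, and your tube-lemma step (compactness of $K = X\setminus\bigcup_i U_i$, closedness of $f(K)$, and the neighborhood $V = Y\setminus f(K)$) is exactly the justification the paper leaves implicit. No gaps.
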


\begin{proof}
If a fiber $f^{-1}(y)$ is covered by open sets $U_i \subset X$, with diameters $<\UW_d(f^{-1}(y))+\eps$ and multiplicity at most $d+1$, then these open sets in fact cover neighboring fibers $f^{-1}(y')$ as well.
\end{proof}

\section{Waist of maps with arbitrary fibers}
\label{sec:arbitraryfibers}

\begin{theorem}[{\cite[Corollary~H$_1'$]{gromov1988width}}]
\label{thm:waistunconstrained}
Let $X$ be a compact metric space, and let $Y$ be a metrizable %\footnote{In fact, the Hausdorff separation axiom suffices.}%, as one can observe that $\inf\limits_{\text{open }V \ni y} \UW_d(f^{-1}(V)) = \UW_d(f^{-1}(y))$ under this assumption}
topological space of covering dimension $m$. Every continuous map $f: X \to Y$ has a fiber $f^{-1}(y)$ of $d$-width $\UW_{d}(f^{-1}(y)) \ge \UW_{n-1}(X)$, where $n = (m+1)(d+1)$.
\end{theorem}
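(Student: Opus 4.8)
The plan is to induct on $m = \dim Y$. The base case $m = 0$ is essentially trivial: $Y$ is then a (compact, by passing to $f(X)$) $0$-dimensional space, $n = d+1$, and one of the fibers $f^{-1}(y)$ must carry the full $(d)$-width of $X$ — indeed, a cover of each fiber by $\le d+1$-multiple families of small-diameter opens, extended slightly to nearby fibers (using Lemma~\ref{lem:semicontinuous} and compactness to make the choice locally finite), assembles into a cover of $X$ of the same multiplicity, giving $\UW_d(X) \le \sup_y \UW_d(f^{-1}(y))$, i.e. $\UW_d(f^{-1}(y)) \ge \UW_d(X) = \UW_{n-1}(X)$ for some $y$.

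For the inductive step, suppose the statement holds for targets of dimension $< m$, and let $f : X \to Y^m$. Write $Y = Y^{(m-1)} \cup (\text{open } m\text{-cells})$ using a simplicial (or CW) structure on $Y$, which we may assume since in~\eqref{eq:um} the test space can be taken to be a complex and $Y$ itself, being metrizable of covering dimension $m$, can be handled by a fine triangulation of a nerve. Set $A = f^{-1}(Y^{(m-1)})$ and let $B$ be the preimage of the union of the open $m$-cells. By the inductive hypothesis applied to $f|_A : A \to Y^{(m-1)}$ with the shifted parameter $n' = m(d+1)$, one either finds already a fiber over $Y^{(m-1)}$ of $d'$-width at least $\UW_{n'-1}(A)$ for a suitable auxiliary $d'$, or — and this is the case we want — one concludes that $A$ itself has small $(n'-1)$-width, say $\UW_{m(d+1)-1}(A) < \UW_{n-1}(X)$. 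On the complementary part $B$, each $m$-cell $\sigma$ contributes $f^{-1}(\sigma) \to \sigma$, a map to an $m$-ball; over a ball one has an extra factor of $(d+1)$ available, and a point-preimage of a fine subdivision of $\sigma$ together with the $A$-side cover should be spliced.

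More precisely, the mechanism I would use is the following "width subadditivity under one-dimensional assembly": if $X = \bigcup_j X_j$ is covered by closed pieces indexed by the cells, arranged so that the pattern of nonempty intersections has dimension $\le k$ (here $k$ will be controlled by $\dim Y = m$), and each $X_j$ admits a map to a complex of dimension $\le \ell$ with fibers of diameter $< w$, then $X$ admits a map to a complex of dimension $\le \ell + k$ with fibers of diameter bounded in terms of $w$ and the overlaps. Iterating over the $m+1$ skeleta of $Y$, starting from the $0$-skeleton where each piece gets a $d$-dimensional model (cost $d$) and adding $1$ to the model dimension each time we pass to the next skeleton while multiplying the available room, yields a map $X \to Z$ with $\dim Z = (m+1)(d+1) - 1 = n-1$ and small fibers — contradicting $\UW_{n-1}(X)$ unless some fiber of $f$ was large to begin with. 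The counting $(m+1)(d+1)$ is exactly: $d+1$ "dimensions of slack" per skeletal layer, times $m+1$ layers.

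The main obstacle is the splicing step itself: assembling the fiberwise maps $f^{-1}(y) \to Z_y$ continuously across a cell while keeping fibers small and keeping the total target dimension under control. One cannot literally take $\bigcup_y Z_y$; instead one must, over each simplex $\sigma^j$ of a fine subdivision of $Y$, interpolate between the models on the faces, and the dimension of the resulting local model grows by the dimension of $\sigma^j$ — this is where the $+1$-per-skeleton bookkeeping comes from, and where upper semicontinuity (Lemma~\ref{lem:semicontinuous}) plus compactness of $X$ are needed to choose a single $w$ and finitely many covers uniformly. Getting the multiplicities to multiply correctly — so that $d+1$ becomes $(m+1)(d+1)$ rather than something larger — is the delicate point, and I expect the cleanest route is to phrase everything in the covering language~\eqref{eq:uo}: refine $f$ through the nerve of a fine cover of $Y$, pull back, and then merge the fiberwise covers cell by cell, checking that the multiplicity of the merged cover is at most $(d+1)$ in each cell and picks up at most a factor governed by how many cells meet, i.e. at most $m+1$ in total.
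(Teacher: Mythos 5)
There is a genuine gap, and it lies exactly where you flag ``the delicate point.'' The bulk of your argument --- induction on the skeleta of $Y$, with fiberwise maps $f^{-1}(y)\to Z_y$ spliced/interpolated over each cell via an unproved ``width subadditivity under assembly'' mechanism --- cannot deliver this theorem. First, any scheme that merges fiberwise \emph{maps} across a cell inevitably merges fibers of the assembled map, so diameters add; without a bound on the topological complexity of the fibers the number of merged pieces is uncontrolled (this is precisely why the paper's Theorem~\ref{thm:waist} needs the interpolation machinery of Section~\ref{sec:boundedfibers} and only yields a constant depending on $\beta$ and $m$). Here, however, the claimed conclusion is $\UW_d(f^{-1}(y)) \ge \UW_{n-1}(X)$ with constant exactly $1$, which no lossy splicing argument of the kind you sketch can produce. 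Second, the inductive step itself is muddled: the hypothesis applied to $f|_A$ gives (in contrapositive form) ``all fibers over $Y^{(m-1)}$ of $d$-width $<u$ implies $\UW_{m(d+1)-1}(A)<u$,'' not a dichotomy involving an auxiliary $d'$, and the treatment of the preimages of the open $m$-cells is left at the level of ``should be spliced.''

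The actual proof needs no induction and no skeleta; it is a single multiplicity count in the covering language~\eqref{eq:uo}, which your last sentence gestures at but does not carry out. Since each fiber is compact, an open cover of $f^{-1}(y)$ of multiplicity $\le d+1$ and small diameters also covers $f^{-1}(V)$ for some open $V\ni y$ (the mechanism of Lemma~\ref{lem:semicontinuous}), so $\UW_d(f^{-1}(y)) = \inf_{V\ni y}\UW_d(f^{-1}(V))$. Assuming all fibers have $d$-width $< u := \UW_{n-1}(X)$, choose such neighborhoods, use $\dim Y = m$ to refine them to an open cover of $f(X)$ of multiplicity $\le m+1$, and pull back: each piece $U_i \subset X$ has $\UW_d(U_i) < u$, hence carries an open cover $\{U_{ij}\}_j$ of multiplicity $\le d+1$ with $\diam U_{ij} < u$. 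The combined cover $\{U_{ij}\}$ of $X$ has multiplicity $\le (m+1)(d+1) = n$ and all diameters $< u$, contradicting the definition of $\UW_{n-1}(X)$. Note that diameters never add here because one multiplies \emph{multiplicities of covers}, not sizes of merged fibers --- this is the step your proposal leaves unverified, and it is the whole proof.
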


\begin{proof}
The assumptions on $Y^m$ imply that $\UW_d(f^{-1}(y)) = \inf\limits_{\text{open } V \ni y} \UW_d(f^{-1}(V))$. Supposing the contrary to the statement of the theorem, and pulling back a fine open cover of $Y$, we obtain an open cover $\{U_i\}$ of $X$ of multiplicity at most $m+1$, such that $\UW_{d}(U_i) < u := \UW_{n-1}(X)$ for all $i$. It follows from the definition of the $d$-width that every $U_i$ admits an open cover $U_i = \bigcup\limits_j U_{ij}$ of multiplicity at most $d+1$, with $\diam U_{ij} < u$. The cover $\{U_{ij}\}$ of $X$ has multiplicity at most $(m+1)(d+1)$, and it can be assumed finite (by compactness), so we get $\UW_{n-1}(X) < u$, which is absurd.
\end{proof}

The relation between dimensions $n,m,d$ in Theorem~\ref{thm:waistunconstrained} is optimal, as the following result (generalizing example~\ref{cl:A} from the introduction) shows.

\begin{theorem}
\label{thm:waistunconstrainedsharp}
Let $n = (m+1)(d+1) - 1$, and let $\eps > 0$ be any small number. There exists a continuous map $f: B^n \to \triangle^m$ from the unit euclidean $n$-ball to the $m$-simplex, whose fibers all have Urysohn $d$-width less than $\eps$.
\end{theorem}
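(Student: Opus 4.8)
The plan is to realize $f$ as a simplicial map to $\triangle^m$ from a very fine triangulation of $B^n$, induced by a coloring of the vertices by the $m+1$ colors $\{0,1,\dots,m\}$ with the property that \emph{every simplex uses each color at most $d+1$ times}. The role of the hypothesis $n=(m+1)(d+1)-1$ is precisely this: a top-dimensional simplex has $n+1=(m+1)(d+1)$ vertices, so a coloring that is balanced on each top simplex (each color used exactly $d+1$ times) is numerically possible; and the moment such a coloring exists, each monochromatic subcomplex $Z_i$ (the union of the simplices all of whose vertices are colored $i$) is at most $d$-dimensional, because no simplex contains $d+2$ vertices of a single color. A fiber of $f$ will then be squeezed onto one of these $Z_i$ with displacement at most the mesh of the triangulation.

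Concretely, I would take for the triangulation the Freudenthal--Kuhn triangulation $T$ of $\mathbb{R}^n$ with vertex set $\delta\mathbb{Z}^n$, whose top simplices are the order simplices $v,\,v+\delta e_{\pi(1)},\,v+\delta e_{\pi(1)}+\delta e_{\pi(2)},\,\dots,\,v+\delta(e_{\pi(1)}+\dots+e_{\pi(n)})$, ranging over $v\in\delta\mathbb{Z}^n$ and permutations $\pi$ of $\{1,\dots,n\}$; its mesh is $\delta\sqrt n$, which I make smaller than $\eps/2$. Color a vertex $v=\delta(a_1,\dots,a_n)$ by
\[
c(v)=\left\lfloor\frac{(a_1+\dots+a_n)\bmod(n+1)}{d+1}\right\rfloor\in\{0,1,\dots,m\}.
\]
The $n+1$ vertices of any order simplex have coordinate-sums equal to $n+1$ consecutive integers, hence hit all residues modulo $n+1$, hence realize every color exactly $d+1$ times; every simplex of $T$ is a face of a top simplex, so it uses each color at most $d+1$ times, and in particular $\dim Z_i\le d$ for all $i$. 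Let $F\colon\mathbb{R}^n\to\triangle^m$ be the simplicial map sending each vertex $v$ to the $c(v)$-th vertex of $\triangle^m$, and set $f=F|_{B^n}$. Since $\UW_d$ does not increase under passing to a subset, it is enough to bound $\UW_d\!\big(F^{-1}(t)\cap\overline{B^n}\big)$ for each $t\in\triangle^m$.

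To that end fix $t=(t_0,\dots,t_m)\in\triangle^m$ and let $i_0=\min\{i:t_i>0\}$. If $p=\sum_l\lambda_l w_l$ is a point of a simplex $\sigma$ of $T$ (with vertices $w_l$ and barycentric coordinates $\lambda_l$) lying in $F^{-1}(t)$, then $\sum_{l:\,c(w_l)=i_0}\lambda_l=t_{i_0}>0$, and I set
\[
\pi(p)=\frac{1}{t_{i_0}}\sum_{l:\,c(w_l)=i_0}\lambda_l\, w_l .
\]
This is a convex combination of the at most $d+1$ vertices of $\sigma$ colored $i_0$, so $\pi(p)$ lies in $Z_{i_0}$, a complex of covering dimension $\le d$. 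The formula involves only vertices with $\lambda_l>0$, so $\pi$ does not depend on the choice of $\sigma\ni p$ and is continuous on $F^{-1}(t)$; moreover $p$ and $\pi(p)$ lie in the common simplex $\sigma$, so $\dist(p,\pi(p))$ is at most the mesh of $T$. Hence any two points of $F^{-1}(t)$ with the same $\pi$-image lie within twice the mesh, so the restriction of $\pi$ to $F^{-1}(t)\cap\overline{B^n}$ has $\W(\pi)<\eps$, and the map-based definition~\eqref{eq:um} of the Urysohn width yields $\UW_d(f^{-1}(t))\le\W(\pi)<\eps$.

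The only place where anything is genuinely at stake is the first step: $n=(m+1)(d+1)-1$ is exactly the threshold at which the balanced coloring exists while simultaneously forcing $\dim Z_i\le d$, and the explicit coloring above has to be checked to deliver both. Everything after that is bookkeeping — continuity of $\pi$ across shared faces, the fact that a simplicial complex all of whose simplices have dimension $\le d$ has covering dimension $\le d$, and the harmless passage from $\mathbb{R}^n$ back to $B^n$. As a sanity check, for $m=d=1$ (so $n=3$) this recovers a variant of example~\ref{cl:A}: the coloring of the fine Kuhn triangulation of $\mathbb{R}^3$ produces monochromatic $1$-skeleta $Z_0,Z_1$, and each fiber of $f$ retracts onto one of them with displacement $\lesssim\eps$.
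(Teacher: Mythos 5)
Your proof is correct and follows essentially the same route as the paper: a fine triangulation of $\mathbb{R}^n$ with a coloring balanced on each top simplex (your residue-mod-$(n+1)$ coloring of the Freudenthal--Kuhn triangulation is a concrete instance of the nicely colored triangulation of Lemma~\ref{lem:triangulation}), the induced simplicial map to $\triangle^m$ (the paper's join map $\tau$ of Definition~\ref{def:join}), and the fiberwise retraction onto a monochromatic $d$-complex $Z_{i_0}$ with displacement bounded by the mesh (the paper's $\pi_i$). The only difference is that you make the triangulation, coloring, and retraction formula explicit where the paper cites the type-$A$ Coxeter construction.
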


\begin{remark}
\label{rem:ballwidth}
It is easy to show that $\UW_{n-1}(B^n) > 0$. This can be deduced from the Lebesgue covering theorem~\cite{lebesgue1911non, brouwer1913naturlichen}, or from the Knaster--Kuratowski--Mazurkiewicz theorem~\cite{knaster1929beweis}. In fact, the exact value $\UW_{n-1}(B^n) = \sqrt{\frac{2n+2}{n}}$ is known (see~\cite[pp.~84--85,~268]{tikhomirov1976some}~or~\cite[Remark~6.10]{akopyan2012borsuk}).
\end{remark}

The crucial tool used in the proof of Theorem~\ref{thm:waistunconstrainedsharp} is the \emph{local join representation} of $\mathbb{R}^n$, which will be also used in Section~\ref{sec:sasha}.

\begin{lemma}[{cf.~\cite[Lemma~4.1]{balitskiy2020local}}]
\label{lem:triangulation}
Fix $\eps > 0$. There is a locally finite triangulation of $\mathbb{R}^n$ by simplices of diameter $< \eps$, admitting a nice coloring: the vertices receive colors $0, 1, \ldots, n$ so that each simplex receives all distinct colors.
\end{lemma}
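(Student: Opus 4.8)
The plan is to write down an explicit model triangulation — a rescaled \emph{Kuhn} (equivalently, \emph{Freudenthal}) triangulation of $\mathbb{R}^n$ — together with an explicit coloring, and then to check by inspection that the coloring is nice. No auxiliary results are really needed; the work is entirely in the bookkeeping of the construction.

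First I would recall the Kuhn triangulation of the integer lattice. For each $z \in \mathbb{Z}^n$ and each permutation $\sigma$ of $\{1, \ldots, n\}$, consider the simplex with vertices $v_0 = z$ and $v_k = z + e_{\sigma(1)} + \cdots + e_{\sigma(k)}$ for $k = 1, \ldots, n$, so that $v_n = z + (1, \ldots, 1)$; geometrically these are the monotone staircase paths from one corner of the cube $z + [0,1]^n$ to the opposite corner. As $z$ ranges over $\mathbb{Z}^n$ and $\sigma$ over the $n!$ permutations, these simplices assemble into a locally finite triangulation of $\mathbb{R}^n$ (each unit cube is cut into $n!$ of them, and two simplices meet along a common face). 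This is classical and can simply be quoted, e.g.\ from the reference cited in the statement. Every such simplex lies in a unit cube, hence has diameter $\sqrt{n}$. Choosing a scaling factor $t \in (0,\ \eps/\sqrt{n})$ and dilating the whole picture by $t$ produces a locally finite triangulation of $\mathbb{R}^n$ whose simplices have diameter $t\sqrt{n} < \eps$ and whose vertex set is $t\mathbb{Z}^n$.

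It remains to exhibit the coloring. I would assign to the vertex $tz$, with $z \in \mathbb{Z}^n$, the color $c(tz) = \bigl(\sum_{i=1}^n z_i\bigr) \bmod (n+1) \in \{0, 1, \ldots, n\}$. To see this is nice, take any simplex of the scaled triangulation; its vertices are $tv_0, \ldots, tv_n$ with $v_k = v_{k-1} + e_{\sigma(k)}$, so the coordinate sums satisfy $\sum_i (v_k)_i = s + k$ where $s = \sum_i z_i$. Thus the colors of the $n+1$ vertices are $s, s+1, \ldots, s+n$ taken modulo $n+1$; these are $n+1$ consecutive integers, hence pairwise incongruent mod $n+1$, so the vertices receive all $n+1$ distinct colors $0, 1, \ldots, n$, as required.

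I do not expect a genuine obstacle. The one mildly technical ingredient — that the Kuhn simplices genuinely glue into a simplicial complex (any two meeting along a common face) — is standard and will be cited rather than reproved; local finiteness, the diameter estimate, and the verification of the coloring all follow directly and mechanically from the construction above.
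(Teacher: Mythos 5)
Your proof is correct, but it takes a different route from the paper. The paper obtains the triangulation from the type $A$ affine Coxeter hyperplane arrangement: the alcoves of that arrangement are simplices congruent to one another and related by reflections in facets, and the coloring comes for free from the types of the alcove vertices. You instead use the rescaled Kuhn--Freudenthal triangulation (staircase simplices subdividing each lattice cube into $n!$ simplices) and color a vertex $tz$ by $\sum_i z_i \bmod (n+1)$; since the coordinate sums along a staircase are $n+1$ consecutive integers, each simplex indeed sees all $n+1$ colors, and the diameter and local finiteness estimates are immediate after rescaling. Your construction is more elementary and entirely explicit, at the cost of losing the extra symmetry of the Coxeter picture (the Kuhn simplices are not closed under reflection in facets); but that symmetry plays no role in the lemma or in its later use, and the paper itself remarks that simpler constructions are possible --- yours is exactly such a construction. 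The only ingredient you quote rather than prove, namely that the Kuhn simplices fit together into a simplicial complex, is indeed classical and acceptable to cite.
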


\begin{proof}
In fact, there is such a triangulation with simplices congruent to one another, via the reflection in the facets. Such a triangulation can be obtained from the type $A$ root system and the corresponding affine Coxeter hyperplane arrangement (see~\cite[Chapter~6]{shi1986kazhdan}). (Of course, simpler constructions are also possible.)
\end{proof}

\begin{definition}[{cf.~\cite[Definition~4.2]{balitskiy2020local}}]
\label{def:join}
Let $n = (m+1)(d+1) - 1$, and triangulate $\mathbb{R}^n$ by $\eps$-small simplices, as in Lemma~\ref{lem:triangulation}. Define $Z_i$, $0 \le i \le m$, to be the union of all simplices of the triangulation colored by colors $(d+1)i$ through $(d+1)i+d$. We say that $\mathbb{R}^n$ is the \emph{$\eps$-local join} of $d$-dimensional complexes $Z_0, \ldots, Z_m$.
\end{definition}
The name is justified by the following observation: every (top-dimensional) simplex $\sigma$ of the triangulation can be written as the join $(\sigma \cap Z_0) * \ldots * (\sigma \cap Z_m)$; that is, any point $x \in \sigma$ can be written as
\[
x = \sum\limits_{i=0}^m t_i z_i, \quad \text{where } z_i \in \sigma \cap Z_i, ~~ t_i \ge 0, ~~ \sum_{i=0}^m t_i = 1.
\]
The coefficients $t_i$ are determined uniquely, giving a well-defined \emph{join map}
\[
\tau : \mathbb{R}^n \to \triangle^{m} = \left\{(t_0, \ldots, t_m) ~\middle\vert~ t_i \ge 0, ~ \sum_{i=0}^m t_i = 1 \right\}.
\]
Note that $Z_i = \tau^{-1}(v_i)$, where $v_0, \ldots, v_m$ are the vertices of $\triangle^{m}$. For each vertex $v_i$, denote the opposite facet of $\triangle^{m}$ by $v_i^\vee$. For each complex $Z_i$, its \emph{dual} $(md + m -1)$-dimensional complex is given by $Z_i^\vee = \tau^{-1}(v_i^\vee)$. There are natural retractions
\[
\pi_i :  \mathbb{R}^n \setminus Z_i^\vee \to Z_i,
\]
defined by sending $x = \sum\limits_{i=0}^m t_i z_i \in \sigma$ to $z_i \in \sigma \cap Z_i$; they are well-defined since $t_i \neq 0$ whenever $x \notin Z_i^\vee$. Note that $\pi_i$ moves each point by distance $< \eps$.

\begin{proof}[Proof of Theorem~\ref{thm:waistunconstrainedsharp}]
Represent $\mathbb{R}^n$ as the \emph{$\eps/2$-local join} of $d$-dimensional complexes $Z_0, \ldots, Z_m$; let $\tau : \mathbb{R}^n \to \triangle^m$ be its join map. Take $f$ to be the restriction of $\tau$ on the unit ball $B^n$. Let us check that the $d$-width of any fiber $F = f^{-1}(t_0, \ldots, t_m)$ is small. Fix any $i$ for which $t_i \neq 0$. The (restricted) retraction map $\pi_i\vert_{F} :  F \to Z_i$ has fibers of diameter $<\eps$, so we are done.
\end{proof}

\section{Waist of maps with fibers of bounded complexity}
\label{sec:boundedfibers}

This section generalizes example~\ref{cl:B} from the introduction. The main result, Theorem~\ref{thm:waist}, in particular implies the following waist inequality.

\textit{Any piecewise linear map $f: X^{m+2} \to Y^m$ from a metric $(m+2)$-polyhedron to an $m$-polyhedron must have a fiber of $1$-width at least $\frac{\UW_{m+1}(X)}{2\beta m +m^2 + m + 1}$, where $\beta = \sup\limits_{y\in Y} \rk H_1(f^{-1}(y))$ measures the topological complexity of the map.}

\subsection{PL maps of polyhedra.}

%\begin{definition}
%\label{def:polyhedron}
%\begin{enumerate}
%  \item We use the word \emph{polyhedron} to refer to a topological space admitting a structure of a finite simplicial complex (together with rectilinear structure on each simplex), though we do not usually specify this structure. We say a continuous map $X \to Y$ of polyhedra is a \emph{piecewise linear map}, or a \emph{PL map}, if it is simplicial for some fine simplicial structures on $X$ and $Y$.
%  \item A polyhedron with a metric space structure will be called a \emph{metric polyhedron}. We use the words \emph{riemannian polyhedron} to denote a polyhedron endowed with a smooth riemannian metric on each maximal simplex, so that the metrics on adjacent simplices match in restriction to their common face.
%\end{enumerate}
%\end{definition}

We use the word \emph{polyhedron} to refer to a topological space admitting a structure of a finite simplicial complex (together with rectilinear structure on each simplex), though we do not usually specify this structure. We say a continuous map $X \to Y$ of polyhedra is a \emph{piecewise linear map}, or a \emph{PL map}, if it is simplicial for some fine simplicial structures on $X$ and $Y$.

A polyhedron with a metric space structure (giving the same topology) will be called a \emph{metric polyhedron}. For example, it could be a polyhedron endowed with a smooth riemannian metric on each maximal simplex, so that the metrics on adjacent simplices match in restriction to their common face.

For a map $f: X \to Y$, we sometimes denote the preimage $f^{-1}(A)$ of a subset $A \subset Y$ by $X_A$, if there is no confusion and $f$ is understood from the context. If $X$ and $A \subset Y$ are polyhedra, and $f$ is a PL map, then $X_A$ is naturally a polyhedron. If additionally $X$ is metric, then $X_A$ is metric as well (with the extrinsic metric).

\begin{definition}
\label{def:topcomplexity}
We measure the \emph{topological complexity} using the first Betty number. For a space $X$, we set $\tc(X) = \rk H_1(X; \mathbb{Z}) = \dim H_1(X; \mathbb{Q})$. For a map $f: X \to Y$, we set $\tc(f) = \sup\limits_{y \in Y} \tc(X_y)$.
\end{definition}

\begin{remark}
\label{rem:topcomplexity}
In fact, the estimates~\ref{lem:inter},~\ref{lem:parainter},~\ref{thm:waist},~\ref{lem:intersimplex} of this section hold in a stronger form, with $\tc(\cdot)$ replaced by a smaller quantity. Namely, we define $\tc'(X)$ as the largest number of linearly independent classes in $H^1(X; \mathbb{Q})$ with pairwise zero cup-products. Similarly, for a map $f: X \to Y$, we set $\tc'(f) = \sup\limits_{y \in Y} \tc'(X_y)$. We formulate our results with $\tc(\cdot)$ for simplicity, but in the proofs we indicate the adjustments needed if we use $\tc'(\cdot)$.
\end{remark}

\begin{example}
\label{ex:topcomplexity}
If $X$ is a closed connected oriented surface of genus $g$, then $\tc(X) = 2g$ while $\tc'(X) = g$. If $X$ is a connected oriented surface of genus $g$ with $q > 0$ punctures, then $\tc(X) = \tc'(X) = 2g+q-1$.
\end{example}

\begin{lemma}
\label{lem:plbundle}
Every PL map $f: X \to Y$ of polyhedra satisfies the following regularity assumption. Fix a simplicial structure on $Y$ for which $f$ is simplicial. Fix a simplex $\triangle \subset Y$ (of any dimension), and let $\mathring{\triangle}$ be its relative interior. Then one can pick a PL map $\Psi_{\triangle}: \triangle \times \Sigma_\triangle \to X_{\triangle}$, for some polyhedron $\Sigma_\triangle$, such that
\begin{itemize}
  \item $\Psi_\triangle$ is fibered over $\triangle$:%$f(\Psi_{\triangle}(y,s)) = y$ for all $y \in \triangle$, $s \in \Sigma_\triangle$;
\[
\xymatrix{
\triangle \times \Sigma_\triangle \ar[r]^{\Psi_{\triangle}} \ar[rd]_{\emph{projection}} & X_{\triangle} \ar[d]^{f} \\
& \triangle \subset Y
}
\]
  \item the restriction
  \[
  \Psi_\triangle\vert_{\mathring\triangle \times \Sigma_\triangle} : \mathring\triangle \times \Sigma_\triangle \to X_{\mathring\triangle}
  \]
  is a homeomorphism making $f$ a fiber bundle over $\mathring\triangle$.
\end{itemize}
%Then $f$ is a trivial PL fiber bundle over $\mathring{\triangle}$, that is, there is a homeomorphism $\Phi_{\mathring\triangle}: X_{\mathring{\triangle}} \to \mathring{\triangle} \times \Sigma_\triangle$, for some polyhedron $\Sigma_\triangle$. Moreover, one can pick a PL map $\Phi_{\triangle}: X_{\triangle} \to \triangle \times \Sigma_\triangle$
%the map $\Phi$ can be chosen a restriction of a PL map $\Phi_{\triangle}: X_{\triangle} \to \triangle \times \Sigma_\triangle$ such that $\Phi_{\triangle}^{-1}(\{y\} \times \Sigma_\triangle) = X_y$, though it does not have to be a homeomorphism.
\end{lemma}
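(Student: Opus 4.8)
The plan is to read everything off the simplicial structure directly. Fix simplicial structures on $X$ and $Y$ for which $f$ is simplicial, with $\triangle = [v_0,\dots,v_k]$ a simplex of $Y$. The starting point is the elementary fact that a simplicial map carries the relative interior of each simplex \emph{onto} the relative interior of its image simplex (a positive barycentric combination maps to a positive barycentric combination). Hence $X_\triangle = f^{-1}(\triangle)$ is the subcomplex of $X$ spanned by the simplices $\tau$ with $f(\tau)\subseteq\triangle$, while $X_{\mathring\triangle}$ is precisely the union of the relative interiors $\mathring\tau$ of those $\tau$ with $f(\tau)=\triangle$.

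For each such $\tau$ (with $f(\tau)=\triangle$) write the join decomposition $\tau = \tau_0 * \cdots * \tau_k$, where $\tau_j$ is the (nonempty) face of $\tau$ spanned by the vertices sent to $v_j$; then $f|_\tau$ is the join of the constant maps $\tau_j \to \{v_j\}$, and every $x\in\tau$ has a unique expression $x=\sum_j t_j y_j$ with $y_j\in\tau_j$, $t_j\ge 0$, $\sum_j t_j =1$, and $f(x)=\sum_j t_j v_j$. I take $\Sigma_\triangle := f^{-1}(b)$, where $b$ is the barycenter of $\triangle$. A point of $\Sigma_\triangle$ is exactly a tuple of ``join coordinates'' $(y_0,\dots,y_k)$ with each $y_j$ ranging over the $\tau_j$ of some simplex $\tau = \tau_0 * \cdots * \tau_k$ of $X$ mapping onto $\triangle$; thus $\Sigma_\triangle$ is a polyhedron, with a natural cell structure whose closed cells are the products $\prod_j \tau_j$, glued along the faces coming from inclusions of such simplices $\tau$.

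Now define $\Psi_\triangle\colon \triangle\times\Sigma_\triangle \to X_\triangle$ by the ``reconstitution'' formula: for $t=(t_0,\dots,t_k)\in\triangle$ and $\sigma\in\Sigma_\triangle$ with coordinates $(y_0,\dots,y_k)$, set $\Psi_\triangle(t,\sigma)=\sum_j t_j y_j$. This lies in the face $\ast_{\,t_j>0}\,\tau_j$ of $\tau$, hence in $X$; it depends only on $\sigma$ (the $y_j$ are intrinsic to $\sigma$, no choice of $\tau$ is needed), it is continuous, and clearly $f\circ\Psi_\triangle$ is the projection to $\triangle$. On each closed cell $\triangle\times\prod_j\tau_j$ the map $\Psi_\triangle$ is the canonical iterated join–collapse map onto the simplex $\tau=\tau_0*\cdots*\tau_k$, which is PL, and these local descriptions agree on overlapping cells; so $\Psi_\triangle$ is PL. Over $\mathring\triangle$ every $t_j$ is positive, and the uniqueness of the join decomposition of the carrier simplex shows $\Psi_\triangle|_{\mathring\triangle\times\Sigma_\triangle}$ is a bijection onto $X_{\mathring\triangle}$ with PL inverse $x\mapsto(f(x),\sigma(x))$, where $\sigma(x)$ reads off the join coordinates of $x$. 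This inverse is linear on each relative interior $\mathring\rho$, hence PL, so $\Psi_\triangle|_{\mathring\triangle\times\Sigma_\triangle}$ is a PL homeomorphism commuting with the projections to $\mathring\triangle$, which exhibits $f$ over $\mathring\triangle$ as the trivial bundle with fiber $\Sigma_\triangle$ (in particular as a fiber bundle). If $X$ is riemannian then $X_\triangle$ is a riemannian subpolyhedron, so there is nothing further to check.

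The only genuinely fussy point is the PL bookkeeping around joins: that the products $\prod_j\tau_j$ assemble into an honest polyhedral structure on $\Sigma_\triangle=f^{-1}(b)$, and that the canonical parametrization $\triangle\times\tau_0\times\cdots\times\tau_k \to \tau_0*\cdots*\tau_k$, $(t,(y_j))\mapsto\sum_j t_j y_j$, is PL and restricts compatibly to faces. These are standard facts of PL topology (the join of finite simplicial complexes is a simplicial complex, and a join is canonically a PL quotient of the corresponding prism), so I would invoke a standard reference (e.g.\ Rourke--Sanderson) for the join–collapse being PL and spell out the gluing in a sentence or two rather than treat it as a real obstacle.
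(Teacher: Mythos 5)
Your choice of $\Sigma_\triangle$ is exactly the paper's (its entire proof is ``take the fiber over the center of $\triangle$''), and the topological content of your construction is correct: the reconstitution map $\Psi_\triangle(t,\sigma)=\sum_j t_j y_j$ is well defined (the join coordinates of $\sigma\in f^{-1}(b)$ are intrinsic), continuous, fibered over $\triangle$, and over $\mathring\triangle$ it is a fiber-preserving continuous bijection onto $X_{\mathring\triangle}$, hence a homeomorphism (for instance because $\Psi_\triangle$ is proper and $\mathring\triangle\times\Sigma_\triangle$ is the full preimage of the open set $X_{\mathring\triangle}$). Your identification of $X_{\mathring\triangle}$ with the union of open simplices $\mathring\tau$ with $f(\tau)=\triangle$ is also fine.

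The genuine gap is the PL claim. The parametrization $\triangle\times\tau_0\times\cdots\times\tau_k\to\tau_0*\cdots*\tau_k$, $(t,(y_j))\mapsto\sum_j t_j y_j$, is \emph{not} piecewise linear as soon as some $\tau_j$ has positive dimension: its coordinates contain genuine products $t_j\cdot(\text{coordinates of }y_j)$, and a bilinear function such as $(t,s)\mapsto ts$ is affine on no $2$-dimensional set, so no triangulation makes it simplicial. The ``standard fact'' you invoke is stated in Rourke--Sanderson with the opposite sign: joins of PL maps are PL, but the natural map from the prism onto the join is not PL. The same defect hits your parenthetical claim that the inverse is ``linear on each relative interior'': recovering $\sigma(x)$ rescales the join coordinates by $1/t_j$. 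Moreover, this is not fixable by a cleverer formula. If a fiber of $f$ degenerates over a face of $\triangle$ --- already for $X=[w_0,w_1,w_2]\to Y=[v_0,v_1]$ with $w_0,w_1\mapsto v_0$, $w_2\mapsto v_1$, $\triangle=Y$ --- then any map that is PL on the compact product $\triangle\times\Sigma_\triangle$ is affine on some $2$-simplex of a finite triangulation having an edge inside $\{v_1\}\times\Sigma_\triangle$; on that edge $\Psi_\triangle$ is constant (it must land in the one-point fiber over $v_1$), so by affineness $\Psi_\triangle$ is constant along the parallel slices of that simplex lying over points $t$ close to $v_1$, contradicting fiberwise injectivity over $\mathring\triangle$. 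So as written your argument cannot be closed, and the literal ``PL on $\triangle\times\Sigma_\triangle$'' wording cannot hold in such examples either: you should either settle for a continuous $\Psi_\triangle$ on the closed product (which your formula does provide, and which is what the later metric/width estimates actually use), or assert PL-ness only over $\mathring\triangle\times\Sigma_\triangle$, where it can be arranged using a locally finite triangulation accumulating at $\partial\triangle$ --- but not via the bilinear formula on the closed product, and not by citing the prism-to-join map as PL.
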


\begin{proof}
For $\Sigma_\triangle$, take the fiber over the center of $\triangle$, and the rest can be verified easily.
\end{proof}

\subsection{Connected maps.}

\begin{definition}
\label{def:connectedmap}
Let $f: X \to Y$ be a continuous map of topological spaces. It is called \emph{connected} if the fibers $f^{-1}(z)$, $z \in Z$, are (nonempty and) path-connected. Every map $f$, connected or not, can be factored as
\[
X \overset{\widetilde{f}}\to \widetilde{Y} \to Y,
\]
with $\widetilde{f}$ connected, and with $\widetilde{Y}$ being the space of path-connected components of the fibers of $f$ (topologized by the finest topology making $\widetilde{f}$ continuous). The map $\widetilde{f}$ is called the \emph{associated connected map}.
\end{definition}

If $f$ is a PL map of polyhedra, then $\widetilde{f}$ is also PL, and $\widetilde{Y}$ is a polyhedron having the same dimension as $f(X)$.

\begin{lemma}
\label{lem:homology-les}
Let $f: X \to Y$ be a connected PL map of polyhedra. If $Y$ is connected then $X$ is connected.
\end{lemma}

\begin{proof}
Let $\gamma : [0,1] \to Y$ be a path in the base. Fix a simplicial structure of $Y$ for which $f$ is simplicial.
% Homotope it slightly with fixed endpoints (while keeping notation $\gamma$) to make it transversal to the codimension $1$ skeleton of $Y$ (in a simplicial structure of $Y$ for which $f$ is simplicial).
Let us build a path $\widetilde{\gamma}: [0,1] \to X$ covering $\gamma$ in the following weak sense: there is a monotone reparametrization map $r: [0,1] \to [0,1]$ such that $f(\widetilde{\gamma}(t)) = \gamma(r(t))$. First, split $\gamma$ into arcs each of which belongs to a single cell of $Y$. Without loss of generality, there are finitely many of these arcs (this can be achieved by homotoping $\gamma$ slightly, while fixing endpoints). For each such arc $[t',t''] \to Y$, one can lift $\gamma$ by Lemma~\ref{lem:plbundle}. %This lifting procedure is trivial over small intervals $[t',t'']$ such that $\gamma([t',t''])$ belongs to a cell of $Y$.
If $\gamma$ is lifted independently over $[t',t]$ and $[t,t'']$, the two lifted patches can be connected inside the fiber $f^{-1}(\gamma(t))$. This is how $\widetilde{\gamma}$ can be built. For the assertion of the lemma, having two points $x, x' \in X$, one can connect $f(x)$ to $f(x')$ in the base, and lift the path as above. The endpoints of the lifted path can be connected to $x$ and $x'$ in the corresponding fibers. This proves that $X$ is connected.
\end{proof}

\subsection{Foliations.}

\begin{definition}
\label{def:foli}
Let $\Sigma$ be a topological space. We use the word \emph{foliation} to denote a continuous map $p: \Sigma \to Z$ to a graph (finite $1$-dimensional simplicial complex), in the sense that $\Sigma$ is foliated by the fibers $p^{-1}(z)$, $z \in Z$ (the \emph{leaves}).
\end{definition}

This is a non-standard use of the word ``foliation''. We could have used the word ``slicing'' as well in this context.

\begin{definition}
\label{def:folisimple}
Let $\Sigma$ be a polyhedron. We say a foliation $p : \Sigma \to Z$ is \emph{simple} if it is a connected PL map.
\end{definition}

%Lemma~\ref{lem:homology-les} shows that a simple foliation induces an epimorphism in the first homology; in this case, $\tc(Z)$ is bounded by $\tc(\Sigma)$.

For a foliation $p$ of a compact metric space $\Sigma$, recall the notation $\W(p) = \sup\limits_{z \in Z} \diam p^{-1}(z)$ for its width.
The next lemma shows that, in a sense, any its foliation of width $<1$ can be ``simplified'' while keeping its width $<1$.

\begin{lemma}
\label{lem:simplify}
If a metric polyhedron $\Sigma$ admits a foliation of width $<1$, then it also admits a simple foliation width $<1$.
\end{lemma}

\begin{proof}
Let $p: \Sigma \to Z$ be a foliation of width $<1$. Subdivide $Z$ finely so that the preimage of the open star\footnote{The \emph{open star} of a vertex of a simplicial complex is the union of the relative interiors of all faces containing the given vertex. In a graph, the open star of a vertex is the vertex itself together with all incident open edges.} $S_v$ of every vertex $v \in Z$ has diameter $<1$. Use the simplicial approximation theorem to approximate $p$ by a simplicial (for some subdivision of $\Sigma$) map $p'$ such that for each $x \in \Sigma$, $p'(x)$ belongs to the minimal closed cell of $Z$ containing $p(x)$. It follows that each fiber of $p'$ is contained in $p^{-1}(S_v)$ for some $v \in Z$, so $p'$ has width $<1$.

Next, replacing $p'$ by the associated connected map $\widetilde{p}'$ (which is also PL), we arrive at the situation where the leaves $(\widetilde{p}')^{-1}(z)$ are (nonempty and) connected for all $z \in Z$, and have diameter $<1$.
\end{proof}

\subsection{Interpolation lemma.}

\begin{definition}
\label{def:inter}
Let $\Sigma$ be a topological space, and let $p_0: \Sigma \to Z_0$, $p_1: \Sigma \to Z_1$ be its foliations. An \emph{interpolation} between these is a family of foliations $p_t: \Sigma \to Z_t$, $t \in [0,1]$, continuous in the following sense.
\begin{itemize}
  \item There is a $2$-dimensional polyhedron $Z_{[0,1]}$ together with a \emph{parametrization} map $\pi: Z_{[0,1]} \to [0,1]$, such that $\pi^{-1}(t) = Z_t \subset Z_{[0,1]}$.

  \item There is a continuous map $P : [0,1] \times \Sigma \to Z_{[0,1]}$ fibered over $[0,1]$, and giving $p_t$ when restricted over $\{t\}$:

\begin{minipage}{0.35\textwidth}
\[
\xymatrix{
[0,1] \times \Sigma\ar[r]^{P} \ar[rd]_{\text{projection}} & Z_{[0,1]} \ar[d]^{\pi} \\
& [0,1]
}
\]
\end{minipage}
\begin{minipage}{0.35\textwidth}
\[
\xymatrix{
\{t\} \times \Sigma\ar[r]^{p_t} \ar[rd]_{\text{projection}} & Z_t \subset Z_{[0,1]} \ar[d]^{\pi} \\
& \{t\}
}
\]
\end{minipage}
%  \item There is a $2$-dimensional cell complex $Z_{[0,1]}$, and a slicing map $\pi: Z_{[0,1]} \to [0,1]$ such that $\pi^{-1}(t) = Z_t$.
%  \item There is a continuous map $P : [0,1] \times \Sigma \to Z_{[0,1]}$ such that for each $t \in [0,1]$, the restriction $P\vert_{\{t\}\times \Sigma}$ lands in $Z_t$, and coincides with $p_t$.
\end{itemize}
\end{definition}

\begin{lemma}
\label{lem:inter}
Let $\Sigma$ be a metric polyhedron of topological complexity $\beta = \tc(\Sigma)$, and let $p_0 : \Sigma \to Z_0$, $p_1 : \Sigma \to Z_1$ be simple foliations. It is possible to interpolate between them through simple foliations of width at most $(\beta+2)\W(p_0) + (\beta+1)\W(p_1)$.
\end{lemma}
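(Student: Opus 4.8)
The strategy is to interpolate in two stages, using a common refinement of the two foliations as an intermediate stop. Given the simple foliations $p_0 : \Sigma \to Z_0$ and $p_1 : \Sigma \to Z_1$, consider the product map $q = (p_0, p_1) : \Sigma \to Z_0 \times Z_1$ and let $Z_{01}$ be the image; its leaves are the intersections $p_0^{-1}(z_0) \cap p_1^{-1}(z_1)$, so $\W(q) \le \min(\W(p_0), \W(p_1))$. The trouble is that $Z_0 \times Z_1$ is $2$-dimensional, so $q$ is not a foliation in the sense of Definition~\ref{def:foli}; I will need to ``thin it down'' to a graph. The natural way is to first pass to the associated connected map $\widetilde q : \Sigma \to \widetilde{Z_{01}}$ (Definition~\ref{def:connectedmap}), which is a connected PL map of polyhedra with $\dim \widetilde{Z_{01}} \le 1$ because a connected PL map has target of dimension $\dim q(\Sigma)$ --- wait, that is not automatic, so more care is needed: what is true is that generically the fibers of $q$ are $(\dim\Sigma - 2)$-dimensional, but $\widetilde{Z_{01}}$ could still be $2$-dimensional. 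So instead I will build the interpolation directly as a homotopy of maps, not through a single intermediate foliation.

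Concretely, the plan is: construct an interpolation $p_t : \Sigma \to Z_t$, $t \in [0, 1/2]$, from $p_0$ to a simple foliation $p_{1/2} = p_0'$ which is obtained by \emph{cutting} $\Sigma$ along the leaves of $p_1$; that is, $Z_{1/2}$ is the graph obtained from $Z_0$ by the procedure of making the composite ``record, in addition, which component of a $p_1$-leaf one sits in.'' Then symmetrically interpolate from $p_{1/2}$ to $p_1$ over $t \in [1/2, 1]$. For the first half, the parametrizing $2$-complex $Z_{[0,1/2]}$ is built as a mapping-cylinder-like object: over $t < 1/2$ the leaf through $x$ is $p_0^{-1}(p_0(x))$, and as $t \to 1/2$ it gets subdivided into the connected pieces cut out by nearby $p_1$-leaves. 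The width of the intermediate foliations is controlled by the key combinatorial fact: a leaf $L = p_0^{-1}(z_0)$ is a polyhedron with $\tc(L) \le \beta$ (this follows since $L \hookrightarrow \Sigma$ and, using Lemma~\ref{lem:homology-les} applied to $p_0$, ranks of $H_1$ are constrained), and cutting $L$ along the trace of the $p_1$-foliation produces pieces each of which is contained in the union of $L$ with at most $\tc(L) + 1$ leaves of $p_1$ --- because a connected graph-like cut of a space with first Betti number $b$ produces at most $b+1$ pieces when one removes a ``spanning'' collection of separating leaves. This is where the coefficients $(\beta + 2)$ and $(\beta + 1)$ come from: a piece meets $L$ (diameter $\le \W(p_0)$) plus up to $\beta + 1$ bridging $p_1$-leaves (each diameter $\le \W(p_1)$), and when interpolating one must also allow the adjacent $t$-slices to overlap, costing one more $\W(p_0)$.

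The key steps, in order: (1) Reduce, via Lemma~\ref{lem:simplify} and the simplicial approximation theorem, to the situation where $p_0, p_1$ are simplicial for compatible fine triangulations of $\Sigma$, so that the leaves and their mutual intersections are subcomplexes. (2) For each leaf $L$ of $p_0$, analyze the restriction $p_1|_L : L \to Z_1$; since $L$ is a polyhedron with $\tc(L) \le \beta$, show that one can choose a subgraph $T \subset Z_1$ and write $L$ as a union of ``$T$-pieces'' $L_1, \dots, L_k$ with $k \le \beta + 1$, each $L_j$ being the closure of a union of $p_1|_L$-leaves, connected, and such that the family $\{L_j\}$ varies continuously with $L$ --- this is the combinatorial heart. (3) Assemble the intermediate graph $Z_{1/2}$ from these pieces over all leaves of $p_0$, checking it is a genuine graph (the pieces form a $1$-dimensional nerve) and that $p_{1/2}$ is a simple foliation of width $\le (\beta+2)\W(p_0) + (\beta+1)\W(p_1)$. (4) Build the $2$-complex $Z_{[0,1/2]}$ realizing the homotopy, e.g. as a subdivision of $Z_0 \times [0,1/2]$ near $t = 1/2$, and verify the fibered map $P$ exists by a PL interpolation. (5) Mirror for $[1/2,1]$, swapping the roles, and concatenate.

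\textbf{Main obstacle.} The delicate point is step (2)–(3): producing, \emph{continuously in the $p_0$-leaf}, a bounded-size decomposition of each leaf into pieces that are compatible enough across neighboring leaves to glue into an honest graph $Z_{1/2}$ and an honest $2$-complex $Z_{[0,1/2]}$. Plain leaf-by-leaf cutting is easy; making the cuts vary continuously (so that the ``which piece am I in'' function is continuous on $\Sigma$, not just on each fiber) requires choosing the separating $p_1$-leaves within $L$ in a canonical, monodromy-respecting way --- essentially choosing a spanning structure in a family of graphs parametrized by $Z_0$. I expect this to require working cell-by-cell over the (subdivided) complex $Z_0$ and using Lemma~\ref{lem:plbundle} to trivialize the $p_0$-bundle over each open cell, then matching choices across faces by a further subdivision; the bookkeeping that the width bound $(\beta+2)\W(p_0) + (\beta+1)\W(p_1)$ survives all these subdivisions is the part demanding the most care.
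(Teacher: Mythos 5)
Your plan stalls exactly where the lemma is hard, and the combinatorial claim that is supposed to produce the bound is false as stated. The cheap half of your scheme is the jump from $p_0$ to a refinement of $p_0$ (a mapping-cylinder interpolation, fine); but the intermediate foliation $p_{1/2}$ you describe --- each $p_0$-leaf $L$ decomposed into at most $\beta+1$ pieces, each a union of $p_1\vert_L$-leaves --- refines $p_0$ only, not $p_1$. Hence the second half is not a ``mirror'' of the first: the mirrored construction would end at a refinement of $p_1$, not at your $p_{1/2}$, and passing from a refinement of $p_0$ to $p_1$ (where fibers must merge and the width can blow up) is the original problem, left entirely unaddressed. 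The only object refining both foliations is the connected common refinement, whose leaf space is $2$-dimensional --- the very obstruction you noted and then did not circumvent. Moreover, the counting fact you lean on (``cutting a space of first Betti number $b$ along separating leaves produces at most $b+1$ pieces'') is wrong: a disk, with $\tc = 0$, cut along a single $p_1$-trace already falls into two or more pieces; $\rk H_1$ bounds independent cycles, not complementary components. Note also that your pieces $L_j$ sit inside a single $p_0$-leaf, so they have diameter at most $\W(p_0)$ and the term $(\beta+1)\W(p_1)$ never actually arises in your construction; and the continuity/monodromy problem you flag in steps (2)--(3) (choosing the decomposition consistently as the $p_0$-leaf varies) is a genuine obstruction, not bookkeeping.

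For contrast, the paper avoids any common refinement. It sweeps $Z_1$ by a filtration $Z_1^{(t)}$ growing from a single point (Lemma~\ref{lem:filter}), and defines $p_t$ by keeping the $p_1$-fibers over $Z_1^{(t)}$ and filling the complement with connected components of restricted $p_0$-fibers, gluing fibers that touch; this one formula works uniformly in $t$, which resolves the continuity issue for free. The width bound is then a chain-length estimate, not a piece count: each consecutive pair of merged fibers yields a loop in $\Sigma$ (an arc inside a $p_0$-fiber joined to an arc inside $p_1^{-1}(Z_1^{(t)})$, which is connected by Lemma~\ref{lem:homology-les}), these loops are homologically independent, so a merged chain contains at most $\beta+1$ fibers of $p_1$ and $\beta+2$ fibers of $p_0$, giving $(\beta+2)\W(p_0)+(\beta+1)\W(p_1)$. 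Some argument of this kind --- bounding how many fibers can chain together via $H_1(\Sigma)$ --- is what your proposal is missing.
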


We only outline the proof, since a more general statement will be proved in the next subsection. However, this outline illustrates the main method of this section.

%We can assume $\Sigma$ connected (by dealing with each connected component separately).

%To begin with, we replace both $p_0$ and $p_1$ with their simplified versions. We explain this for $p_0$, and for $p_1$ this is done similarly.

%If $p_0$ is not surjective, denote by $p_0' : \Sigma \to Z_0'$ the map $p_0: \Sigma \to p_0(\Sigma)$. One can interpolate between $p_0$ and $p_0'$ in an evident way: just use $p_0'$ for intermediate foliations, and the mapping cylinder of the inclusion $Z_0 \hookleftarrow Z_0'$ for the target $2$-complex. Therefore, we can assume $p_0$ surjective.

%If $p_0$ is not connected, one can interpolate between $p_0: \Sigma \to Z_0$ and the associated connected map $\widetilde p_0: \Sigma \to \widetilde Z_0$ as follows: the target $2$-complex is the mapping cylinder of the covering map $Z_0 \leftarrow \widetilde Z_0$, and the intermediate foliations coincide with $\widetilde p_0$. Therefore, we can assume $p_0$ connected. Observe that this simplification is performed through the intermediate foliations of width $\le~\W(p_0)$. Now Lemma~\ref{lem:inter} is reduced to the case when both $p_0$ and $p_1$ are simple.

\begin{lemma}
\label{lem:filter}
Given a (finite) connected graph $Z$ (viewed as a topological space), there is a filtration by closed subspaces $Z^{(t)} \subset Z$, $t \in [0,1]$, such that
\begin{itemize}
  \item $Z^{(t)} = \alpha^{-1}([0,t])$, for some continuous function $\alpha : Z \to [1/2,1]$;
  \item $Z^{(1/2)} = \alpha^{-1}(1/2)$ consists of a single point;
  \item every preimage $\alpha^{-1}(t)$, $t \in [1/2,1]$, consists of finitely many points (informally, this condition says that $Z^{(t)}$ depends continuously on $t$).
%
%  \item the family $\{Z^{(t)}\}$ is nested, that is, $Z^{(t)} \subset Z^{(t')}$ whenever $t < t'$;
%  \item $Z^{(t)} = \varnothing$ for $t \in [0,1/2)$;  $Z^{(1/2)}$ consists of a single point; $Z^{(1)} = Z$;
%  \item for $t \ge 1/2$, $Z^{(t)}$ is connected;
%  \item for $t<1$, $Z^{(t)} = \bigcap\limits_{t' \in (t,1]} Z^{(t')}$;
%  \item if $\mathring Z^{(t)} = \bigcup\limits_{t' \in [0,t)} Z^{(t')}$, then $Z^{(t)} = \overline{\mathring Z^{(t)}}$.
\end{itemize}
One can also consider a satellite filtration by open subspaces $\mathring Z^{(t)} = \bigcup\limits_{t' \in [0,t)} Z^{(t')} = \alpha^{-1}([0,t))$.
\end{lemma}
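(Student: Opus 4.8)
The plan is to build the function $\alpha: Z \to [1/2,1]$ essentially as a rescaled distance function from a chosen basepoint, then perturb it so that its level sets are finite. First I would pick a vertex $v_0 \in Z$ (after subdividing if necessary so that $Z$ has at least one vertex), and consider the intrinsic path metric $d_Z$ on the graph $Z$, normalized so that $\operatorname{diam}_{d_Z}(Z) \le 1/2$ (rescale the edge lengths; this does not affect $Z$ as a topological space, which is all we care about). Define $\alpha_0(x) = 1/2 + d_Z(v_0, x)$. This is continuous, takes values in $[1/2,1]$, satisfies $\alpha_0^{-1}(1/2) = \{v_0\}$, and $Z^{(t)} := \alpha_0^{-1}([0,t]) = \alpha_0^{-1}([1/2,t])$ is the closed metric ball of radius $t - 1/2$ around $v_0$, which is closed and depends monotonically on $t$. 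Setting $Z^{(t)} = \varnothing$ for $t < 1/2$ and $Z^{(t)} = Z$ for $t$ large enough (guaranteed by $\operatorname{diam} \le 1/2$) covers the full range $t \in [0,1]$.

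The remaining issue is the third bullet: a metric sphere $\alpha_0^{-1}(t)$ in a graph need not be finite — for a radius $t$ equal to the distance from $v_0$ to the midpoint of some edge lying "across" a cycle, an entire subarc of that edge can sit at constant distance $t$. To fix this I would perturb $\alpha_0$ on the interior of each edge. Concretely, fix the simplicial structure; for each (closed) edge $e$ of $Z$, the restriction $\alpha_0|_e$ is piecewise linear (it is built from distances to the two endpoints), so it is constant on at most one subinterval of $e$; replace $\alpha_0$ there by a strictly monotone PL function with the same endpoint values and with values still in $[1/2,1]$, doing this consistently so the result $\alpha$ is still continuous on all of $Z$ and still equals $\alpha_0$ on the vertex set. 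After this modification $\alpha$ is PL on each edge and nonconstant on each edge, so $\alpha^{-1}(t)$ meets each edge in finitely many points; since $Z$ has finitely many edges, $\alpha^{-1}(t)$ is finite for every $t \in [1/2,1]$. One checks that $\alpha^{-1}(1/2)$ is still the single vertex $v_0$ (the perturbation only raised values on edge interiors, or can be arranged to), and that $Z^{(t)} = \alpha^{-1}([0,t])$ is still closed with $\mathring Z^{(t)} = \alpha^{-1}([0,t)) = \bigcup_{t' < t} Z^{(t')}$, the last equality because $\alpha$ is continuous with finite fibers so its image has no gaps near any value. Finally I would note that "$Z^{(t)}$ depends continuously on $t$" is exactly the statement that the fibers $\alpha^{-1}(t)$ are finite (no mass appears all at once), matching the parenthetical remark.

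The main obstacle is the third bullet — ensuring finiteness of all level sets — since the naive distance function fails precisely on edges that close up cycles; everything else is routine bookkeeping about PL functions on a finite graph. A mild subtlety to handle carefully is doing the per-edge perturbation compatibly at shared vertices so global continuity and the normalization $\alpha(v_0) = 1/2$, $\alpha \ge 1/2$ are preserved; this is arranged by only modifying $\alpha$ on the open edges and keeping the endpoint values fixed.
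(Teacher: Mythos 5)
Your construction is essentially the paper's: the paper's whole proof is the formula $\alpha(z) = \frac{\dist_Z(z_0,z)}{2\sup_{z'}\dist_Z(z_0,z')} + 1/2$ for a basepoint $z_0$, and your $\alpha_0 = 1/2 + d_Z(v_0,\cdot)$ with rescaled edge lengths is the same function up to normalization; specifying the intrinsic path metric is in fact the right way to nail the third bullet (the paper's ``any metrization'' is loose there, since for a generic metric a distance sphere can contain an arc). One correction to your middle step, though: for the path metric your worry is unfounded. On an edge $e$ with endpoints $a,b$ of length $\ell$, parametrized by arc length $s$, one has $d_Z(v_0,x)=\min\bigl(d_Z(v_0,a)+s,\ d_Z(v_0,b)+\ell-s\bigr)$, a concave piecewise linear ``tent'' with slopes $\pm 1$; it is never constant on a subarc (in the ``edge across a cycle'' picture the sphere of radius $t$ meets the far edge in at most two points, not an interval), so every level set meets each edge in at most two points and is automatically finite --- no perturbation is needed. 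It is lucky that this case is vacuous, because the fix as you state it could not be carried out: on a maximal subinterval where $\alpha_0$ is constant the two endpoint values coincide, so no \emph{strictly monotone} replacement with the same endpoint values exists (one would instead insert a nonconstant wiggle staying $\ge 1/2$, and then still argue finiteness of level sets from the absence of constant linear pieces). With that detour removed, the remaining verifications (closedness of $Z^{(t)}$, $\alpha^{-1}(1/2)=\{v_0\}$, and $\mathring Z^{(t)}=\alpha^{-1}([0,t))=\bigcup_{t'<t}Z^{(t')}$) are exactly as you describe and match the paper.
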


\begin{proof}
Such a filtration can be constructed using
\[
\alpha(z) = \frac{\dist_Z(z_0, z)}{2\sup\limits_{z' \in Z} \dist_Z(z_0, z')} + 1/2
\]
for any fixed point $z_0 \in Z$ and any metrization of $Z$. %Alternatively Evidently this can be constructed starting from any vertex and using the depth first search, for instance.
\end{proof}

\begin{proof}[Outline of the proof of Lemma~\ref{lem:inter}]
We can assume $\Sigma$ connected (by dealing with each connected component separately).

The graph $Z_1$ is connected, since $\Sigma$ is connected, and $p_1$ is simple (hence surjective).
Filter $Z_1$ as in Lemma~\ref{lem:filter}: $Z_1^{(0)} \subset \ldots \subset Z_1^{(t)} \subset \ldots \subset Z_1^{(1)}$, $t \in [0,1]$.
%The interpolation between $p_0$ and $p_1$ is done in two steps.
%
%\textit{Step 1.} Denote the quotient space $Z_0 / p_0(p_1^{-1}(Z_1^{(0)}))$ by $Z_0'$. Let $q : Z_0 \to Z_0'$ be the quotient map, and let $\star$ be the point of $Z_0'$ corresponding to the contracted copy of $p_0(p_1^{-1}(Z_1^{(0)}))$. Consider the following foliation:
%\begin{align*}
%  p_0' : \Sigma &\to Z_0' \\
%  x &\mapsto \begin{cases}
%             \star, & \mbox{if } p_0(x) \in p_0(p_1^{-1}(Z_1^{(0)})) \\
%             q(p_0(x)), & \mbox{otherwise}.
%           \end{cases}
%\end{align*}
%Note that the diameter of any fiber of $p_0'$ is at most $2\W(p_0) + \W(p_1)$.
%Interpolate between $p_0$ and $p_0'$ as follows: the target $2$-complex is the mapping cylinder of the quotient map $q$, and the intermediate foliations coincide with $p_0$.
%
%\textit{Step 2.}
We interpolate between $p_0$ and $p_1$ through foliations $p_t : \Sigma \to Z_t$, which can be roughly described as follows. To get a picture of $p_t$, first you draw the fibers of $p_1$ over $Z_1^{(t)}$. Then in the remaining room we draw the fibers of $p_0$ (their parts that fit). The resulting picture is interpreted as a foliation by connected leaves, and we call it $p_t$ (see Figure~\ref{fig:interfoli}).

\begin{figure}[ht]
  \centering
  \includegraphics[width=0.85\textwidth]{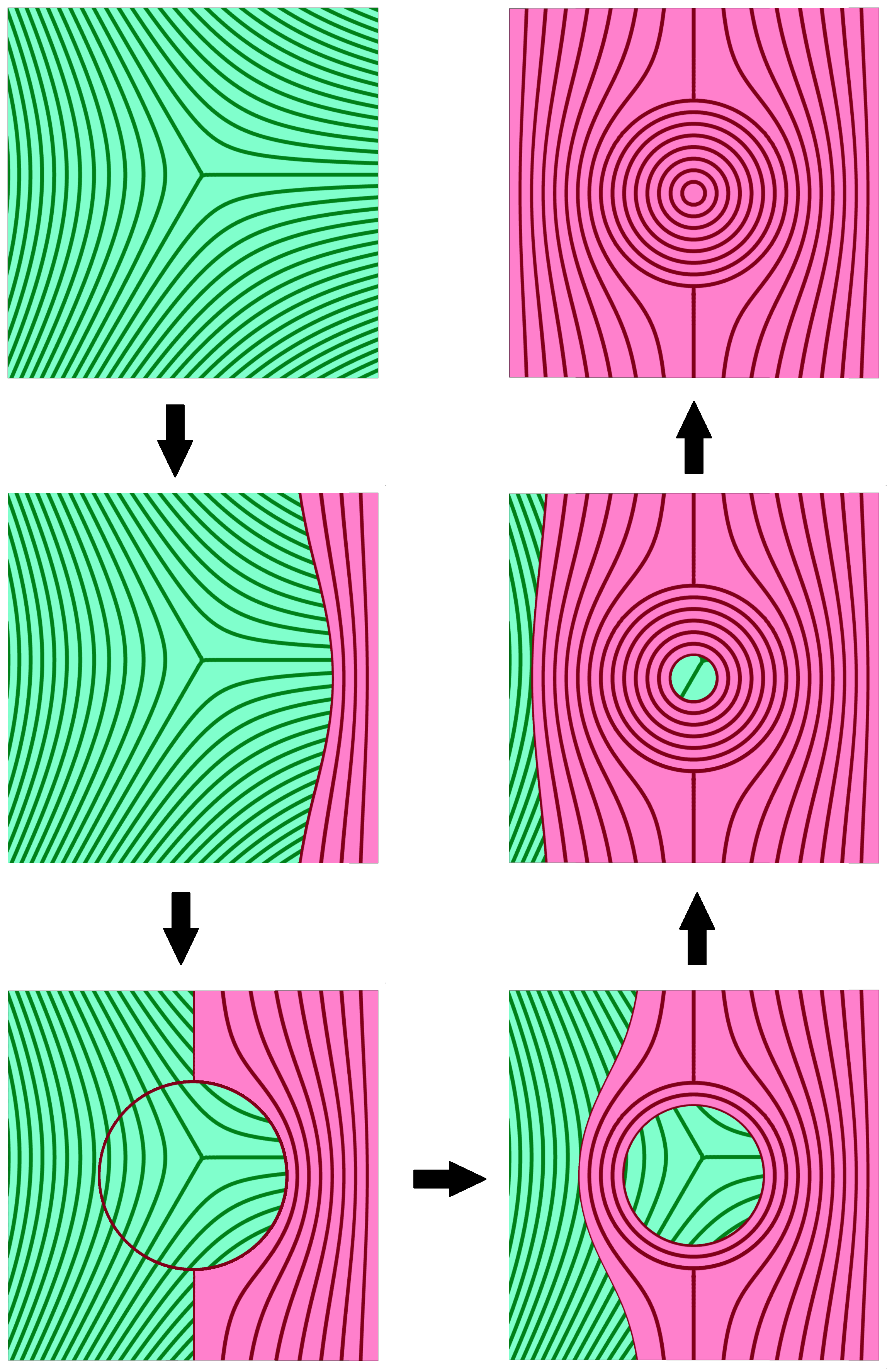}
  \caption{Interpolation between foliations. Each rectangle represents a foliation of $\Sigma$, given by a map to a graph. The foliations $p_0$ and $p_1$ are pictured in green and red, respectively}
  \label{fig:interfoli}
\end{figure}

Let us rigorously describe the space of leaves $Z_t$ and the foliation map $p_t$.

\begin{itemize}
  \item Define $Z_{0}^{(t)}$, $t \in [0,1]$, as the minimal closed subspace of $Z_0$ such that $p_0^{-1}(Z_{0}^{(t)}) \cup p_1^{-1}(\mathring Z_{1}^{(t)}) = \Sigma$; in other words,
  \[
  Z_{0}^{(t)} = p_0\left(\Sigma \setminus p_1^{-1}(\mathring Z_1^{(t)})\right).
  \]
    We write $\Sigma^{(t)} = \Sigma \setminus p_1^{-1}(\mathring Z_1^{(t)})$ for short.
  \item The map $p_0\vert_{\Sigma^{(t)}} : \Sigma^{(t)} \to Z_{0}^{(t)}$ might not have all fibers connected, so we factor it through its associated connected map:
\[
\Sigma^{(t)} \overset{\widetilde p_{0}^{(t)}}\to \widetilde Z_{0}^{(t)} \to Z_{0}^{(t)}.
\]
 %There are natural maps $\widetilde Z_{0}^{(t)} \to Z_{0}^{(t)} \to Z_0$.
  \item The graph $Z_t$ is defined as
\[
\left(\widetilde Z_{0}^{(t)} \sqcup Z_{1}^{(t)} \right)/ {\overset{t}\sim},
\]
where $\overset{t}\sim$ is the following equivalence relation. Let us write $z \overset{t}\approx z'$ if $z \in \widetilde Z_{0}^{(t)}$, $z' \in Z_{1}^{(t)}$, and $(\widetilde p_{0}^{(t)})^{-1}(z)$ intersects $p_{1}^{-1}(z')$.
Define $\overset{t}\sim$ to be the transitive closure of $\overset{t}\approx$.
    There are natural maps $\iota_{0}^{(t)} : \widetilde Z_{0}^{(t)} \to Z_t$ and $\iota_{1}^{(t)} : Z_{1}^{(t)} \to Z_t$.
  \item The map $p_t : \Sigma \to Z_t$ is defined as
\[
  p_t(x) = \begin{cases}
             \iota_{1}^{(t)}(p_1(x)), & \mbox{if } p_1(x) \in Z_{1}^{(t)} \\
             \iota_{0}^{(t)}\left(\widetilde p_{0}^{(t)}(x)\right), & \mbox{otherwise}.
           \end{cases}
\]
    Observe that for $t=0,1$ this agrees with the original foliations $p_0$ and $p_1$.
\end{itemize}

This describes the intermediate foliations $p_t$, but in order to describe the interpolation completely we also need to explain how the graphs $Z_t$ assemble into a $2$-complex $Z_{[0,1]}$, and how the maps $p_t$ assemble into a continuous map $P : [0,1] \times \Sigma \to Z_{[0,1]}$. We do not give these details here, because a more general construction will be explained in the next subsection.

To finish the proof, we need to bound the size of the fibers of $p_t$. Why could it be possibly large? Because in the process of interpolating some vertices of the target graph merged under the $\overset{t}\sim$-identification, so multiple fibers of $p_0$ and $p_1$ might have been united. Consider a fiber of $p_t$. For this fiber, consider the longest chain of identifications
\[
z_0 \overset{t}\approx z_1' \overset{t}\approx z_1 \overset{t}\approx z_2' \overset{t}\approx \ldots
\]
with $z_j \in \widetilde Z_0^{(t)}$, and with $z_j' \in Z_1^{(t)}$ all distinct. Suppose it has more than $1+\tc(\Sigma)$ elements of $Z_1^{(t)}$. To every subchain $z_j' \overset{t}\approx z_j \overset{t}\approx z_{j+1}'$ assign an arc inside $(\widetilde p_{0}^{(t)})^{-1}(z_j)$ connecting some two points $x \in p_1^{-1}(z_j')$ and $y \in p_1^{-1}(z_{j+1}')$ (see Figure~\ref{fig:chain}). This arc represents an element of relative homology $H_1(\Sigma, \Sigma_1)$, where we denoted $\Sigma_1 = p_1^{-1}(Z_1^{(t)})$.

\begin{figure}[ht]
  \centering
  \includegraphics[width=0.8\textwidth]{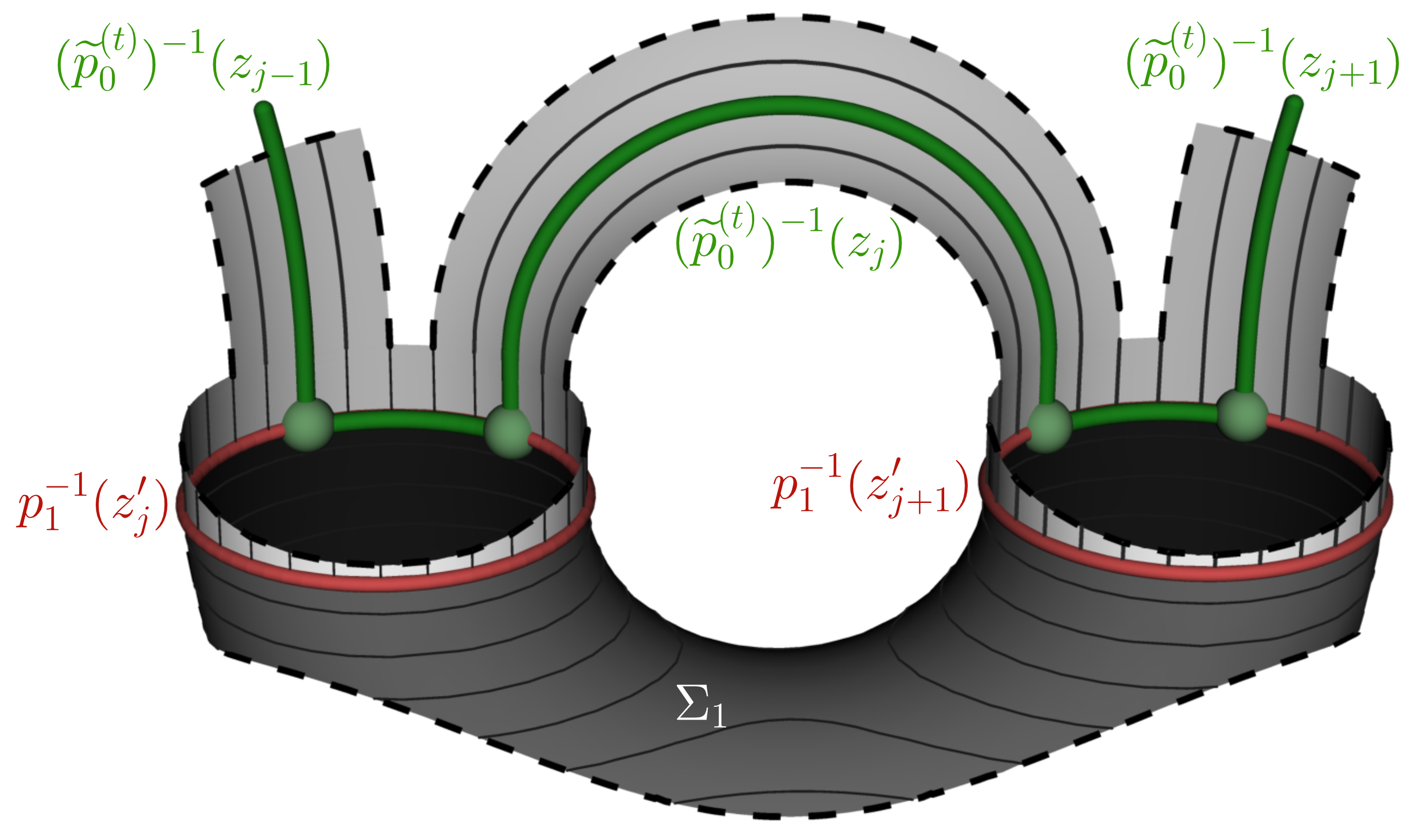}
  \caption{Chain of $\overset{t}\approx$-identifications}
  \label{fig:chain}
\end{figure}

Recall that $\Sigma_1$ is connected by Lemma~\ref{lem:homology-les}, so $\rk H_1(\Sigma, \Sigma_1) \le \tc(\Sigma)$. There must be a relation between the classes of those arcs in $H_1(\Sigma, \Sigma_1)$. It follows that some $z_j'$ repeats in the chain, which proves that such a chain has at most $1+\tc(\Sigma)$ elements of $Z_1^{(t)}$, hence at most $2+\tc(\Sigma)$ elements of $\widetilde Z_0^{(t)}$. We conclude that the diameter of a fiber of $p_t$ is at most $(\beta+2)\W(p_0) + (\beta+1)\W(p_1)$. This finishes the proof if we measure the topological complexity with $\tc(\cdot)$. For the modified complexity $\tc'(\cdot)$, one can assign a class in $H^1(\Sigma)$ to each element $z_j'$ (represented by the cochain that counts intersections with $p_1^{-1}(z_j')$). One needs to verify that there is just one linear dependence between them (coming from the $0$-cochain equal to the characteristic function of $\Sigma_1$), and that their products vanish; this will imply that some $z_j'$ must repeat.
\end{proof}
%which represents a non-trivial element of $H_1(\Sigma)$, since it projects to a non-trivial loop in $Z^{(t)}$. If we are given more than $\tc(\Sigma)$ cycles in $Z^{(t)}$, there must be a relation between them in $H_1(Z^{(t)})$ (recall that $\tc(Z^{(t)}) \le \beta$ by Lemma~\ref{lem:homology-les}). It follows that some $z_j'$ repeats in the chain, which proves such a chain has at most $1+\tc(\Sigma)$ elements of $Z_1^{(t)}$, hence at most $2+\tc(\Sigma)$ elements of $\widetilde Z_0^{(t)}$. We conclude that the diameter of a fiber of $p_t$ is at most $(\beta+2)\W(p_0) + (\beta+1)\W(p_1)$. The proof outline is finished.

\subsection{Parametric interpolation lemma.}

\begin{definition}
\label{def:parafoli}
Let $\Sigma$ be a topological space, and let $\pi: Z_K \to K$ be a map of polyhedra such that every fiber is a (nonempty and) connected graph. A continuous map $P : K \times \Sigma \to Z_K$ is called a \emph{parametric foliation over $K$}, or a \emph{family of foliations parametrized by $K$}, if the composition $\pi \circ P : K \times \Sigma \to K$ is the projection onto the first factor:
\[
\xymatrix{
K \times \Sigma\ar[r]^{P} \ar[rd]_{\text{projection}} & Z_K \ar[d]^{\pi} \\
& K
}
\]
We call $Z_K$ the \emph{space of leaves}, and $\pi$ the \emph{parametrization map}. For $s \in K$, the restriction $P\vert_{\{s\} \times \Sigma}$ can be viewed as a foliation $p_s : \Sigma \to \pi^{-1}(s)$, and we think of $P$ as the family of foliations $p_s$ parametrized by $s \in K$. We say that $P$ is \emph{simple} if it is PL and connected.
\end{definition}

For a parametric foliation $P : K \times \Sigma \to Z_K$ of a metric space $\Sigma$, we keep using the notation $\W(P) = \sup\limits_{z \in Z_K} \diam P^{-1}(z)$ for the width.

\begin{definition}
\label{def:parainter}
Let $\Sigma$ be a topological space.
%\begin{enumerate}
%  \item Let $P_0 : K \times \Sigma \to Z_K$ and $P_1 : K \times \Sigma \to Z_K$ be parametric foliations over the same complex $K$. An \emph{interpolation} between them is a parametric foliation $P : ([0,1] \times K) \times \Sigma \to Z_{[0,1] \times K}$ over the prism $[0,1] \times K$, restricting to $P_j$ on $(\{j\} \times K) \times \Sigma$, $j = 0,1$.
%  \item
  Let $P_0 : K \times \Sigma \to Z_K$ be a family of foliations, and let $p_1 : \Sigma \to Z_1$ be another foliation. An \emph{interpolation} between them is a parametric foliation $P : CK \times \Sigma \to Z_{CK}$ over the cone $CK = ([0,1] \times K)/(\{1\} \times K)$, restricting to $P_0$ over the base $\{0\} \times K$ of $CK$, and to $p_1$ over the apex of $CK$.
%\end{enumerate}
\end{definition}

%\begin{lemma}[Parametric simplification]
%\label{lem:parasimple}
%Let $\Sigma$ be a riemannian polyhedron, and let $P_0 : K \times \Sigma \to Z_K$ be a family of PL foliations. It is possible to build a family of PL foliations $P: ([0,1] \times K) \times \Sigma \to [0,1] \times K$ interpolating between $P_0$ and a simple parametric foliation $P_1$, so that $\W(P) \le \W(P_0)$.
%\end{lemma}
%
%\begin{proof}
%We can assume $\Sigma$ connected (by dealing with each connected component separately). We need to deform $P_0$ to a connected map. Consider the connected map $\widetilde P_0 : K \times \Sigma \to \widetilde Z_K$ associated to $P_0$. Note that $\widetilde Z_K$ is still fibered over $K$ via the composition $\widetilde Z_K \to Z_K \to K$; the fiber over $s \in K$ is still (nonempty and) connected graph as the image of $\Sigma$ under surjective map $\widetilde P_0\vert_{\{s\} \times \Sigma}$. Now interpolate between $P_0$ and $P_1 = \widetilde P_0$ over $[0,1] \times K$ as follows: the space of leaves is the mapping cylinder of $Z_K \leftarrow \widetilde Z_K$, and the intermediate foliation over $\{t\} \times K$ coincides with $\widetilde P_0$. Clearly, the width of the interpolating family is at most $\W(P_0)$.
%\end{proof}

We are in position to prove the principal lemma of this section.

\begin{lemma}[Parametric interpolation]
\label{lem:parainter}
Let $\Sigma$ be a metric polyhedron of topological complexity $\beta = \tc(\Sigma)$. Let $P_K : K \times \Sigma \to Z_K$ be a family of simple foliations over a $d$-dimensional complex $K$, and let $p_1 : \Sigma \to Z_1$ be a simple foliation. It is possible to interpolate between $P_K$ and $p_1$ via a simple family $CK \times \Sigma \to Z_{CK}$ of width at most $(\beta+2)\W(P_0) + (\beta+1)\W(p_1)$.
\end{lemma}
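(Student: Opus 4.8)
The strategy is to generalize the non-parametric construction outlined above (Lemma~\ref{lem:inter}) by carrying out the same interpolation ``fiberwise over $K$,'' using the filtration of Lemma~\ref{lem:filter} applied not to a single graph $Z_1$ but to the family. Concretely, I would first reduce to the case $\Sigma$ connected. Since $p_1$ is simple and surjective, $Z_1$ is a connected graph; filter it as $Z_1^{(t)} = \alpha^{-1}([0,t])$ via Lemma~\ref{lem:filter}. Over the cone $CK = ([0,1]\times K)/(\{1\}\times K)$, with cone parameter $t$ and base parameter $s \in K$, define at each $(t,s)$ the ``mixed'' foliation exactly as in the non-parametric outline: take the fibers of $p_1$ over $Z_1^{(t)}$, fill the complement $\Sigma^{(t)}_s = \Sigma \setminus p_1^{-1}(\mathring Z_1^{(t)})$ (note $\Sigma^{(t)}$ does not actually depend on $s$, only on $t$) with the connected map associated to $P_0$ restricted over $s$, i.e.\ to $p_{0,s}|_{\Sigma^{(t)}}$, and then glue the two target graphs along the equivalence relation $\overset{t}\sim$ generated by intersecting leaves. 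At $t=0$ this gives $p_{0,s}$ (the family $P_0$), and at $t=1$ (the apex) $Z_1^{(1)} = Z_1$, so the mixed foliation is just $p_1$, independent of $s$ — which is exactly what is needed for a map defined on the cone rather than the prism.

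The main work is to organize these pointwise-defined foliations into a genuine parametric foliation $P : (CK) \times \Sigma \to Z_{CK}$, i.e.\ to build the space of leaves $Z_{CK}$ as a polyhedron with a PL parametrization map $\pi: Z_{CK} \to CK$ whose fiber over $(t,s)$ is the graph $Z_{t,s}$, and to check that $P$ is PL. Here I would use the PL regularity of $P_0$ via Lemma~\ref{lem:plbundle}: over the relative interior of each simplex $\triangle \subset K$, $P_0$ is a fiber bundle, so the family of connected quotients $\widetilde Z_0^{(t)}$ varies in a PL-trivial way, and over a simplicial subdivision of $CK$ one can assemble the $Z_{t,s}$ into a cell complex by taking mapping cylinders/products over each cell, exactly as promised (but deferred) in the non-parametric subsection. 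Connectedness of the resulting $P$ follows because each leaf $Z_{t,s}$ is connected by construction (we pass to associated connected maps and then only glue), and simplicity (PL) follows from simplicial approximation applied to the whole family at once, as in Lemma~\ref{lem:simplify}.

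The width bound is then the same counting argument as in the outline, run uniformly in $s \in K$: for a fixed $(t,s)$, a fiber of $p_{t,s}$ that merges many vertices yields a chain $z_0 \overset{t}\approx z_1' \overset{t}\approx z_1 \overset{t}\approx \cdots$; each subchain $z_j' \approx z_j \approx z_{j+1}'$ produces a loop $\gamma_j \subset \Sigma$ — one arc inside a leaf $(\widetilde p_{0,s}^{(t)})^{-1}(z_j) \subset \Sigma^{(t)}$ joining $p_1^{-1}(z_j')$ to $p_1^{-1}(z_{j+1}')$, and a return arc inside $p_1^{-1}(Z_1^{(t)})$, which is connected by Lemma~\ref{lem:homology-les} — and $\gamma_j$ is nontrivial in $H_1(\Sigma)$ because it projects to a nontrivial loop in $Z_1^{(t)}$. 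Since $\tc(Z_1^{(t)}) \le \tc(Z_1) \le \beta$ (again Lemma~\ref{lem:homology-les}, as $p_1$ is simple), more than $\beta$ such loops would be dependent in $H_1(Z_1^{(t)})$, forcing a repeated $z_j'$; hence the chain has at most $1+\beta$ vertices in $Z_1^{(t)}$ and at most $2+\beta$ in $\widetilde Z_0^{(t)}$. A fiber of $p_{t,s}$ is thus a union of at most $2+\beta$ leaves of $p_{0,s}$ and at most $1+\beta$ leaves of $p_1$, giving $\diam \le (\beta+2)\W(P_0) + (\beta+1)\W(p_1)$, uniformly in $(t,s)$, which is the claimed bound on $\W(P)$.

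I expect the genuine obstacle to be purely the bookkeeping of the second paragraph — verifying that $t \mapsto \widetilde Z_0^{(t)}$ and the gluing relation $\overset{t}\sim$ vary PL-ly so that $Z_{CK}$ is an honest polyhedron and $\pi$, $P$ are PL — rather than anything conceptual; the homological width estimate is robust and transfers from the non-parametric case without change, since it is carried out in a single fiber $\Sigma$ at a time.
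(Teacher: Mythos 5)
Your proposal follows essentially the same route as the paper's proof: filter $Z_1$ via Lemma~\ref{lem:filter}, at each cone point $(t,s)$ overlay the fibers of $p_1$ over $Z_1^{(t)}$ with the connected quotient of $p_s$ on the complement, glue touching leaves, and bound the length of identification chains by the homological argument of Lemma~\ref{lem:inter}, giving $(\beta+2)\W(P_0)+(\beta+1)\W(p_1)$. The only cosmetic difference is that the paper builds the space of leaves $Z_{CK}$ globally (as $(\widetilde{\mathcal{Z}}_0 \sqcup \mathcal{Z}_1)/\!\sim$ over all of $CK$ at once) rather than fiberwise-then-assembled as you suggest, but both treat the PL bookkeeping as routine, so the arguments match.
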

%
%In fact, it is easy to remove the assumption of simplicity of $p_1$, by parametrically simplifying $p_1$, with trivial dependence on the parameter. We will only use this lemma for $p_1$ simple, so we omit these details.

%\begin{proof}[Proof of Parametric interpolation lemma~\ref{lem:parainter}]
\begin{proof}
We can assume $\Sigma$ connected (by dealing with each connected component separately).
%We can also assume $P_1$ simple, by Lemma~\ref{lem:parasimple}.

The parametric foliation $P_K$ splits into simple foliations $p_s : \Sigma \to Z_s$, where $Z_s = \pi^{-1}(s)$, $s \in K$, $\pi : Z_K \to K$ is the parametrization of the foliation base.

The proof idea is simple: for each $s \in K$, interpolate between $p_s$ and $p_1$ as in Lemma~\ref{lem:inter}, and make sure that the interpolation depends nicely on $s$, in order to assemble them altogether to a parametric interpolation. The details are pretty technical, and now we write them out.

The graph $Z_1$ is finite and connected, since $\Sigma$ is compact and connected, and $p_1$ is simple (hence surjective).
Filter $Z_1$ as in Lemma~\ref{lem:filter}: $Z_1^{(0)} \subset \ldots \subset Z_1^{(t)} \subset \ldots \subset Z_1^{(1)}$, $t \in [0,1]$.
%The interpolation between $P_K$ and $p_1$ is done in two steps.
%
%\textit{Step 1.} First, we forcefully incorporate one fiber of $p_1$ into the foliations $p_s$. In order to get the space of leaves, inside each $Z_s$ we contract $p_s(p_1^{-1}(Z_1^{(0)}))$. Formally, consider the quotient $Z_K' = Z_K / {\sim}$, where $z \sim z'$ if $\pi(z) = \pi(z')$ and $z, z' \in p_{\pi(z)}(p_1^{-1}(Z_1^{(0)}))$. Let $q : Z_K \to Z_K'$ be the quotient map. The map $\pi$ descends to $Z_K'$, with the fibers being (nonempty and) connected graphs. Denote by $\star_s$ the equivalence class sent to $s \in K$ by $\pi$. Consider the following parametric foliation:
%\begin{align*}
%  P_K' : K \times \Sigma &\to Z_K' \\
%  (s,x) &\mapsto \begin{cases}
%             \star_s, & \mbox{if } p_s(x) \in p_s(p_1^{-1}(Z_1^{(0)})) \\
%             q(p_s(x)), & \mbox{otherwise}.
%           \end{cases}
%\end{align*}
%The family $P_K'$ consists of individual foliations $p_s' : \Sigma \to Z_s'$, $s \in K$, all of them having width less than $3$.
%Interpolate between $P_K$ and $P_K'$ over $[0,1] \times K$: the space of leaves is the mapping cylinder of $q$, and the interpolating family over $\{t\} \times K$ coincides with $P_K$ for all $t \in [0,1)$.
%
%\textit{Step 2.}
We interpolate between $P_K$ and $p_1$ via a family $P : CK \times\Sigma \to Z_{CK}$ to be described. With a little abuse of notation, we use coordinates $(t,s) \in [0,1] \times K$ on $CK$, with a convention that all points $(1,s)$ are identified with the apex of $CK$. %In these coordinates, the parametrization map $Z_{CK} \to CK$ can be viewed as the pair of functions $\tau : Z_{CK} \to [0,1]$ and $\pi : Z_{CK} \setminus \{\star\} \to K$; we keep using the letter $\pi$ here, though so far it is only defined on the cone base.
The restriction $P\vert_{\{(t,s)\} \times \Sigma}$ is a foliation $p_{(t,s)} : \Sigma \to Z_{(t,s)}$, which can be pictured as follows. First, draw the fibers of $p_1$ over $Z_1^{(t)}$; then fill in the remaining room with the fibers of $p_s$ (with their parts that fit). The resulting picture is interpreted as a foliation by connected leaves, and we call it $p_{(t,s)}$.

We now describe $P : CK \times\Sigma \to Z_{CK}$ formally.

\begin{itemize}
  %\item Notice that both $[0,1) \times Z_K$ and $CK \times Z_1$ are endowed naturally with $t$-coordinate in $[0,1]$ and $s$-coordinate in $K$. %We call the corresponding maps $\tau$ and $\pi$.
  \item Define
  \begin{alignat*}{2}
    P_0 :&\quad& [0,1) \times K \times \Sigma &\to [0,1) \times Z_K \\
    && (t,s,x) &\mapsto (t, p_s(x)) \\
    P_1 :&& CK \times \Sigma &\to CK \times Z_1 \\
    && (c,x) &\mapsto (c, p_1(x))
  \end{alignat*}

  \item Define
  \[
  \Z_1 = \bigcup_{(t,s) \in CK} Z_1^{(t)}  \subset CK \times Z_1
  \]
  where we think of $Z_1^{(t)}$ as sitting in $\{(t,s)\} \times Z_1$. The interior of $\Z_1$ is
  \[
  \mathring\Z_1 = \bigcup_{(t,s) \in CK} \mathring Z_1^{(t)}  \subset CK \times Z_1.
  \]
  Define
  %\[
%  \Sig_1 = P_1^{-1}(\Z_1)
%  \]
%
%  \item Define
%  \[
%  \Sig_0 = \overline{[0,1) \times K \times \Sigma \setminus \Sig_1} \subset [0,1) \times K \times \Sigma
%  \]
  \[
  \Sig = \left([0,1) \times K \times \Sigma\right) \setminus P_1^{-1}(\mathring \Z_1)  \subset [0,1) \times K \times \Sigma
  \]
  %\[
%  \overline{\bigcup_{\substack{t \in [0,1) \\ s \in K}} \Sigma \setminus p_1(Z_1^{(t)})} \subset [0,1) \times K \times \Sigma,
%  \]
%  where we think of $\Sigma \setminus p_1(Z_1^{(t)})$ as sitting in $\{t\} \times \{s\} \times \Sigma$.
%  Define
and
  \[
  \Z_0 = P_0\left( \Sig \right) \subset [0,1) \times Z_K.
  \]
%  \[
%  \overline{\bigcup_{\substack{t \in [0,1) \\ s \in K}} p_s\left(\Sigma \setminus p_1(Z_1^{(t)})\right)} \subset [0,1) \times Z_K,
%  \]
%  where we think of $p_s\left(\Sigma \setminus p_1(Z_1^{(t)})\right)$ as sitting in $\{t\} \times Z_s \subset [0,1) \times Z_K$.
%  Finally, define
%  \begin{align*}
%    P_0 : \Sig &\to \Z_0 \\
%    (t,s,x) \mapsto p_s(x)
%  \end{align*}

  %Define the projections $\tau : \Sig \to [0,1)$ and $\xi: \Sig \to K \times \Sigma$.
  %Define $\Sigma^{(t)} = \overline{\Sigma \setminus p_1(Z_1^{(t)})}$ and %$Z_{0}^{(t,s)} = p_s\left(\Sigma^{(t)}\right)$, for $t<1$.

  \item The map $P_0\vert_{\Sig}$ might not be connected, so we factor it through its associated connected map:
\[
\Sig \overset{\widetilde P_{0}}\to \widetilde{\Z_{0}} \to \Z_0.
\]
    %The space $\widetilde{\Z_{0}}$ inherits $t$- and $s$-coordinates from $\Z_0$.
 %There are natural maps $\widetilde Z_{0}^{(t)} \to Z_{0}^{(t)} \to Z_0$.

  \item The space of leaves is
  \[
   Z_{CK} = \left(\widetilde{\Z_{0}} \sqcup \Z_{1} \right)/ {\sim},
  \]
  where $\sim$ is the following equivalence relation. Let us write $z \approx z'$ if $z \in \widetilde{\Z_{0}}$, $z' \in \Z_{1}$, %they share $t$- and $s$-coordinates,
  and $\widetilde P_{0}^{-1}(z)$ intersects $P_{1}^{-1}(z')$, as subsets of $CK \times \Sigma$. (Recall our convention for coordinates in a cone, in which $[0,1) \times K \subset CK$.)
  Define $\sim$ to be the transitive closure of $\approx$.
  There are natural maps $\iota_{0} : \widetilde{\Z_{0}} \to Z_{CK}$ and $\iota_{1} : \Z_{1} \to Z_{CK}$.
  \item The parametric foliation $P$ is defined as
  \begin{align*}
    P : CK \times \Sigma &\to Z_{CK} \\
    \xi &\mapsto \begin{cases}
             \iota_{1}(P_1(\xi)), & \mbox{if } P_1(\xi) \in \Z_{1} \\
             \iota_{0}\left(\widetilde P_{0}(\xi)\right), & \mbox{otherwise}.
           \end{cases}
  \end{align*}
    It is easy to see that $P$ indeed interpolates between $P_K$ and $p_1$.
\end{itemize}

Clearly, $P$ is connected. It is rather technical but straightforward to make sure that $P$ is PL.

The analysis of the width was already done in Lemma~\ref{lem:inter}. Any foliation from the family $P$ interpolates between certain $p_s$, $s \in K$, and $p_1$, as in the construction of Lemma~\ref{lem:inter}. Therefore, $\W(P) \le (\beta+2)\W(P_0) + (\beta+1)\W(p_1)$.
\end{proof}

%\begin{remark}
%\label{rem:parainter}
%Suppose we are in the setting of Lemma~\ref{lem:parainter}, trying to interpolate between $P_K$ and $p_1$.
%Recall how the filtration of $Z_1$ is constructed in Lemma~\ref{lem:filter}: $Z_1^{(1/2)}$ consists of a single point $z \in Z_1$, and the entire filtration is determined by $z$, provided that we fix a metrization of $Z_1$ in advance.
%The parametric interpolation of Lemma~\ref{lem:parainter} depends nicely on the choice of this $z$. Formally, given a one-parametric family $z: [0,1] \to Z_1$, one can apply Lemma~\ref{lem:parainter} to build a parametric interpolation $P(t)$ between $P_K$ and $p_1$, where $Z_1$ is filtered starting from $z(t)$. Those interpolations can be assembled into a simple family $\bar P : [0,1] \times CK \times \Sigma \to Z_{[0,1]\times CK}$ that restricts to $P(t)$ if we fix the coordinate $t \in [0,1]$. We omit the tedious details, since we only use this remark to improve the constant factor in Theorem~\ref{thm:waist} a bit.
%\end{remark}

\subsection{Waist of a PL map}
Finally, we are ready to prove the main theorem of this section.

\begin{theorem}
\label{thm:waist}
Let $f: X \to Y^m$ be a PL map from a metric polyhedron $X$ to an $m$-dimensional polyhedron $Y$. Let $\beta = \tc(f)$ be its topological complexity, that is, $\beta = \sup\limits_{y \in Y} \tc(f^{-1}(y))$.
Then there is a fiber $X_y = f^{-1}(y)$ of Urysohn width
\[
\UW_{1}(X_y) \ge c(m,\beta) \UW_{m+1}(X),
\]
for some positive constant $c$ depending only on $m$ and $\beta$.
\end{theorem}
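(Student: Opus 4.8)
The plan is to prove the theorem by induction on the skeleta of $Y$, building the foliation of $X$ by connected leaves of diameter controlled in terms of $\beta$ and $m$, and then reading off an upper bound on $\UW_1$ of some fiber. Set $u = \UW_{m+1}(X)$; we will produce, for a suitable constant $c(m,\beta)>0$, a fiber $X_y$ that admits a foliation of width $< u$ whenever the contrary hypothesis $\UW_1(X_y) < c(m,\beta)\,u$ holds for all $y$, and derive a contradiction with the definition of $\UW_{m+1}(X)$. So assume for contradiction that $\UW_1(X_y) < c\,u$ for every $y \in Y$, where $c = c(m,\beta)$ is to be chosen at the end. By upper semicontinuity (Lemma~\ref{lem:semicontinuous}) and the regularity of PL maps (Lemma~\ref{lem:plbundle}), we may fix a fine simplicial structure on $Y$ for which $f$ is simplicial, and such that over each closed simplex $\triangle \subset Y$ the restriction $X_\triangle$ deformation-retracts onto the central fiber $\Sigma_\triangle$ with small displacement; in particular each fiber $X_y$, and hence (after the simplification of Lemma~\ref{lem:simplify}) each $\Sigma_\triangle$, carries a simple foliation of width $< c\,u$, and $\tc(\Sigma_\triangle) \le \beta' $ for some $\beta'$ bounded in terms of $\beta$ and $m$ (the fiber over a positive-dimensional simplex may have larger first Betti number than the generic fiber, but only by a controlled amount since $X_\triangle$ is built from finitely many cells glued along fibers).

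The inductive step is the heart of the argument and is exactly what the parametric interpolation lemma (Lemma~\ref{lem:parainter}) is designed for. Suppose that over the $(k-1)$-skeleton $Y^{(k-1)}$ we have already constructed a simple parametric foliation of $X_{Y^{(k-1)}}$ — more precisely, for each cell we have a compatible family — of width at most $W_{k-1}$, where $W_{k-1}$ is a bound obtained from $c\,u$ by iterating the estimate of Lemma~\ref{lem:parainter}. Take a $k$-simplex $\triangle \subset Y$. Over $\partial\triangle$ the boundary data give a family of simple foliations of $\Sigma_\triangle$ (pulled back along the retraction $X_\triangle \to \Sigma_\triangle$) parametrized by $\partial\triangle \cong S^{k-1}$; meanwhile $\Sigma_\triangle$ also has its own "fresh" simple foliation $p_1$ of width $< c\,u$ coming from the hypothesis on the central fiber. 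We glue $\partial\triangle \times \Sigma_\triangle$ to a cone and apply Lemma~\ref{lem:parainter} (with $K = \partial\triangle$, a $(k-1)$-dimensional complex) to interpolate between the boundary family and $p_1$, obtaining a simple family over $C(\partial\triangle) \cong \triangle$ of width at most $(\beta'+2)W_{k-1} + (\beta'+1)c\,u$. Composing with the retraction $X_\triangle \to \Sigma_\triangle$ (which moves points only a tiny amount, absorbed into a negligible additive $\eps$ chosen at the start) extends the foliation over $X_\triangle$. Doing this simplex by simplex, and checking the families agree on overlaps of closed simplices — this is where "parametric" is essential, because two top cells meeting along a face must induce the same foliation of the fiber over that face — produces a simple parametric foliation over $X_{Y^{(k)}}$ of width $W_k := (\beta'+2)W_{k-1} + (\beta'+1)c\,u$. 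After $m$ steps, starting from $W_0 < c\,u$ on the $0$-skeleton (vertices: just take the simplified fiber foliations), we arrive at a foliation of $X = X_{Y^{(m)}}$ — that is, a continuous map $P : X \to Z$ with $Z$ a graph (dimension $1$) fibered over $Y$ (dimension $m$), so $Z$ has covering dimension at most $m+1$ — of width $W_m \le C(m,\beta')\,c\,u$ for an explicit constant $C$ arising from unrolling the recursion $W_k = (\beta'+2)W_{k-1} + (\beta'+1)c\,u$. Choosing $c = c(m,\beta)$ so small that $C(m,\beta')\,c < 1$ yields a map from $X$ to a complex of dimension $\le m+1$ with all fibers of diameter $< u = \UW_{m+1}(X)$, contradicting the definition of $\UW_{m+1}$. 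This contradiction proves the theorem, and tracking the constant gives the quantitative form $c(m,\beta) = \tfrac{1}{2\beta m + m^2 + m + 1}$ claimed in the introduction after optimizing the bookkeeping.

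The main obstacle I expect is the coherence of the inductive construction across the cell structure of $Y$: the parametric interpolation over a $k$-cell must restrict on its boundary to precisely the family already built over the $(k-1)$-skeleton, and the space of leaves $Z_{CK}$ produced by Lemma~\ref{lem:parainter} must be glued to the previously constructed space of leaves in a way that keeps the total target a genuine $(m+1)$-dimensional polyhedron fibered over $Y$. This requires setting up the induction not as "a foliation over $X_{Y^{(k)}}$" in isolation but as a family over the \emph{whole} relevant skeleton with prescribed behavior over each cone $C(\partial\triangle)$, and verifying that the identifications $\sim$ in Definition/Lemma~\ref{lem:parainter} made for adjacent cells are mutually consistent; the PL-ness and the "fibered over $Y$" property then come along after the standard (tedious) simplicial bookkeeping. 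A secondary point needing care is controlling $\beta' = \sup_\triangle \tc(\Sigma_\triangle)$: one must either argue via Lemma~\ref{lem:plbundle} and a Mayer–Vietoris/spectral-sequence estimate that the Betti numbers of cell-fibers are bounded by a function of $\beta$ and $m$, or work throughout with the generic fiber and push the non-generic fibers into the retraction, so that the only topological complexity entering Lemma~\ref{lem:parainter} at each stage is that of a generic fiber, keeping $\beta' = \beta$ and giving the clean constant.
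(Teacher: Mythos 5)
Your proposal is correct and follows essentially the same route as the paper: induction over the skeleta of $Y$, local trivializations $\Psi_\triangle$ from Lemma~\ref{lem:plbundle}, the parametric interpolation Lemma~\ref{lem:parainter} applied over the cone on $\partial\triangle$, the recursion $w_k=(\beta+2)w_{k-1}+(\beta+1)c\,u$, and the final contradiction with the definition of $\UW_{m+1}(X)$. Two minor remarks: since $\Sigma_\triangle$ is the fiber over the barycenter of $\triangle$, one has $\tc(\Sigma_\triangle)\le\beta$ outright, so your $\beta'$ hedge resolves trivially in the way you yourself suggest; and the constant $\frac{1}{2\beta m+m^2+m+1}$ does not come from bookkeeping the recursion (which only yields $\frac{1}{2(\beta+2)^m-1}$) but from the separate merged-chain analysis of Lemma~\ref{lem:intersimplex} --- though this does not affect the theorem as stated, which only asks for some positive $c(m,\beta)$.
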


\begin{proof}
Replacing $f$ with its associated connected map, we can assume that $f$ is connected. Even if $f$ is not a fiber bundle, still locally this is almost the case by Lemma~\ref{lem:plbundle}. For each simplex $\triangle \subset Y$ (of any dimension) in a fine triangulation of $Y$, the map $f$ can be ``almost'' trivialized over $\triangle$ via a PL map
\[
\Psi_\triangle: \triangle \times \Sigma_\triangle \to X_{\triangle},
\]
for some polyhedron $\Sigma_\triangle$; this map is a genuine trivialization over $\mathring\triangle$, the relative interior of $\triangle$. For each $y \in \mathring\triangle$, this map induces the distance function $d_y^\triangle$ on $\Sigma_\triangle$ defined as
\[
d_y^\triangle(x,x') = \dist_X(\Psi_\triangle(y,x), \Psi_\triangle(y,x')).
\]
%the pullback of the piecewise riemannian metric on $X_y$; we denote the corresponding distance function by $d_y^\triangle$.
Refining the triangulation of $Y$ if needed, we can assume that all metrics $d_y^\triangle$ over $y \in \mathring\triangle$ are $\eps$-close to one another in the following sense: the ``layers'' $\Psi_\triangle(\triangle \times \{x\})$ have diameter less than $\eps/2$ for all $x \in \Sigma_\triangle$, hence for any $x,x' \in \Sigma_\triangle$ and any $y,y' \in \mathring\triangle$ we have $|d_y^\triangle(x,x') - d_{y'}^\triangle(x,x')| \le \eps$.

Suppose that $\UW_{1}(X_{y}) < w_0$, for all $y \in Y$, with $w_0 = c(m, \beta) \UW_{d+1}(X)$ to be specified later. We get a foliation of $X_{y}$ of width less than $w_0$, which can be assumed simple by Lemma~\ref{lem:simplify}. The idea of the proof is to pick a dense discrete set of points in $Y$, and use those foliations to build a map $F : X \to Z^{m+1}$ of controlled width.
This is done inductively on skeleta of $Y$.

At the zeroth step, for each vertex $v$ of $Y$, pick a simple foliation $F_v : X_v \to Z_v$ of width less than $w_0$.

%At the first step, consider an edge $e = (v_0, v_1)$ of $Y$, and the corresponding ``trivialization'' $\Psi_\triangle$. We would like to use Lemma~\ref{lem:inter} to build a parametric foliation $P_e : e \times \Sigma \to Z_e$. In order to apply that lemma, we need to fix a metric on $\Sigma$. Let $\triangle \supset e$ be a $d$-simplex of $Y$. By our assumption on $\Phi_\triangle$, the metrics $g_y^\triangle$ for $y \in e$ are $\eps$-close to each other, so we apply the interpolation lemma with any fixed choice of $g$ (say, $g = g_{v_1}^\triangle$) and get a parametric foliation $P_e$ which has $g$-width less that $(3+2\beta)c$. Then the composition $X_e \overset{\Phi_\triangle}\to e \times \Sigma \overset{P_e}\to Z_e$ has width less than $(3+2\beta)c + 2\eps$.
%

At the $k\textsuperscript{th}$ step, $1 \le k \le m$, we assume that we already defined $F_{k-1}: X_{Y^{(k-1)}} \to Z_{Y^{(k-1)}}$, over the $(k-1)$-skeleton of $Y$, of width less than $w_{k-1}$, and we need to extend it over $Y^{(k)}$. Take a $k$-simplex $\triangle \subset Y$, and consider the corresponding ``trivialization'' $\Psi_\triangle : \triangle \times \Sigma_\triangle \to X_{\triangle}$. Pick a point $y$ in the relative interior of $\triangle$, and a simple foliation $p_y$ of $\Sigma_\triangle$ of $d_y^\triangle$-width $<w_0$. We would like to use Lemma~\ref{lem:parainter} to build a parametric foliation $P_\triangle : \triangle \times \Sigma_\triangle \to Z_\triangle$ interpolating between $p_y$ and the family of foliations
\[
\partial \triangle \times \Sigma_\triangle \overset{\Psi_\triangle}\to X_{\partial \triangle} \overset{F_{k-1}} \to Z_{Y^{(k-1)}}
\]
(here $\partial$ denotes the relative boundary). In order to apply that lemma, we need to fix a metric on $\Sigma_\triangle$, so we use $d_y^\triangle$ (recall that they are all $\eps$-close). We get a map $P_\triangle : \triangle \times \Sigma_\triangle \to Z_\triangle$ of width less than $(\beta+2)w_{k-1} + (\beta + 1)w_0$. The desired map $F_\triangle : X_\triangle \to Z_\triangle$ that we are looking for is already defined over $\partial \triangle$, so we specify it over $\mathring\triangle$:
\[
X_{\mathring\triangle} \overset{\Psi_\triangle^{-1}}\to \mathring\triangle \times \Sigma \overset{P_\triangle}\to Z_\triangle.
\]
The resulting map $F_\triangle$ is continuous. Repeating this over all $k$-simplices we get the map $F_k : X_{Y^{(k)}} \to Z_{Y^{(k)}}$ of width less than
\[
w_k = (\beta+2)w_{k-1} + (\beta + 1)w_0 + \eps.
\]
As $\eps \to 0$, the solution of this recurrence tends to
\[
w_k = (2(\beta+2)^k - 1)w_0.
\]
Therefore, $\UW_{m+1}(X) \le (2(\beta+2)^m - 1) c(m, \beta) \UW_{m+1}(X)$. Hence, for each $c < \frac{1}{2(\beta+2)^m - 1}$, there is a fiber $X_{y(c)}$ of width at least $c \UW_{m+1}(X)$. Finally, send $c \to \frac{1}{2(\beta+2)^m - 1}$, pick a limit point $\bar y$ of $\{y(c)\}$, and note that $\UW_{1}(X_{\bar y}) \ge \frac{\UW_{m+1}(X)}{2(\beta+2)^m - 1}$ by upper semi-continuity of width (Lemma~\ref{lem:semicontinuous}).
\end{proof}

This proof gives the value $c = \frac{1}{2(\beta+2)^m - 1}$. A more careful analysis of the proof leads to a much better value, namely $c = \frac{1}{2\beta m + m^2 + m + 1}$, which we explain now.

The inductive interpolation step in the proof of Theorem~\ref{thm:waist} is done in a manner that allows us to split $Y$ into $m$-simplices (basically according to the barycentric subdivision of the triangulation used in induction), so that over each simplex $\triangle$ we have the following picture. Over the vertices of $\triangle$, we have simple foliations $p_j : \Sigma_\triangle \to Z_j$, $j = 0,1,\ldots,m$. Over a generic point of $\triangle$, we have a foliation $p : \Sigma_\triangle \to Z$ that looks as follows. First, draw the fibers of $p_m$ over $Z_m^{(t_m)}$, a subgraph of $Z_m$ (connected or empty). In the remaining room, draw (the parts of) the fibers of $p_{m-1}$ over $Z_{m-1}^{(t_{m-1})}$, a subgraph of $Z_{m-1}$. Continue in the same fashion. At the last step, fill in the remaining room with (the parts of) the fibers of $p_0$. The touching fibers of different $p_j$ get merged to a single fiber of $p$.
What is the maximal length of a chain of merged fibers? We show that it can be bounded by $2\beta m + m^2 + m + 1$.

%The key estimate for that is
%\[
%\rk H_1(\Sigma, \Sigma_j) \le \beta + m-j,
%\]
%where $\Sigma_j$ is the closed subset of $\Sigma$ covered by the fibers of $p_j, \ldots, p_m$. This homology estimate can be improved if we manage to make $\Sigma_j$ connected; for that, we could pick filtrations $Z_j^{(t_j)}$ so that all starting fibers $p_j^{-1}(Z_j^{(1/2)})$ pass thorough the same point of $\Sigma$. This choice cannot be made consistently for all simplices, so we explain now how to get around this issue and get better bounds.
%
%\begin{proposition}
%\label{prop:intersimplex}
%The constant $c(m, \beta)$ in Theorem~\ref{thm:waist} can be taken equal to $\frac{1}{2\beta m + m^2 + m + 1}$. More generally
%\end{proposition}

\begin{lemma}
\label{lem:intersimplex}
Let $\Sigma$ be a metric polyhedron of topological complexity $\beta = \tc(\Sigma)$. Let $p_j : \Sigma \to Z_j$, $j = 0,1,\ldots,m$, be simple foliations of width at most $1$. Suppose a parametric foliation $P : \triangle \times \Sigma \to Z_\triangle$ over an $m$-simplex (restricting to $p_j$ over the $j\textsuperscript{th}$ vertex of $\triangle$) is obtained by applying Lemma~\ref{lem:parainter} inductively; that is, one first interpolates between $p_0$ and $p_1$, then between the result and $p_2$, and so on. Then the width of $P$ is at most $2\beta m + m^2 + m + 1$.
\end{lemma}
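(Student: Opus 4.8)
The plan is to track carefully how the width grows under each successive application of Lemma~\ref{lem:parainter}, rather than using the crude bound $\W \leq (\beta+2)\W(P_0) + (\beta+1)\W(p_1)$ that would lead to exponential blow-up. Write $w_j$ for the width of the parametric foliation $P^{(j)} : \triangle^j \times \Sigma \to Z_{\triangle^j}$ obtained after interpolating $p_0, \ldots, p_j$ (so $w_0 = \W(p_0) \leq 1$, and $P^{(m)} = P$). The naive recursion coming directly from Lemma~\ref{lem:parainter} is $w_j \leq (\beta+2)w_{j-1} + (\beta+1)$, whose solution is exponential; the point is that the leaves of $P^{(j)}$ that arise from merging fibers have a more rigid structure than a generic foliation of the same width, and re-examining the chain-of-identifications argument from the proof of Lemma~\ref{lem:inter} in this inductive setting yields an additive, not multiplicative, bound.

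First I would set up the combinatorial bookkeeping. A leaf of $P = P^{(m)}$ is, after unwinding the construction, a union of leaves coming from $p_0, p_1, \ldots, p_m$, glued along a chain of identifications $z_0 \approx z_1' \approx z_1 \approx \cdots$ where consecutive pieces intersect. Each identification step that closes a loop in some $Z^{(t)}$ contributes a nontrivial class in $H_1(\Sigma)$, exactly as in the proof of Lemma~\ref{lem:inter}; since $\tc(\Sigma) = \beta$, only $\beta$ such loop-closing steps can occur before a relation forces a repetition. The key refinement is that in the $m$-fold iterated interpolation, each of the $m$ interpolation stages $j \to j{+}1$ can contribute at most $\beta$ "extra" mergings of this loop-closing type (beyond the ones that simply thread through without closing a loop), plus a bounded number of non-loop-closing links. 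So a leaf of $P$ is a concatenation of at most something like $1 + m + \beta m$ leaves of the original foliations $p_0, \ldots, p_m$, each of diameter at most $1$. I would make this count precise by induction on $j$: a leaf of $P^{(j)}$ meets the leaf-spaces $\widetilde{Z}_0^{(\cdot)}$ (the "old" part, i.e.\ $P^{(j-1)}$) and $Z_j^{(\cdot)}$ (the "new" foliation $p_j$) in chains whose length is controlled by $\beta$ via the homological argument, giving an increment of at most $2\beta + (\text{constant})$ pieces per stage.

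The arithmetic then has to be done so that it lands exactly on $2\beta m + m^2 + m + 1$. I expect the bound on the number of constituent leaves to come out as roughly $(2\beta + \text{linear in } j)$ per stage $j$, summed over $j = 1, \ldots, m$, which produces the $2\beta m$ term and the $m^2 + m$ term (the $\sum_{j=1}^m (\text{something like } 2j)$), plus $1$ for the initial leaf; hence the main obstacle, and the part requiring genuine care rather than routine estimation, is getting the per-stage linear-in-$j$ contribution right. This is where the structure of the filtration from Lemma~\ref{lem:filter} and the precise form of the equivalence relation $\overset{t}\sim$ matter: one must verify that when interpolating the already-assembled $P^{(j-1)}$ (whose leaves are unions of up to $O(j)$ elementary leaves) against the fresh $p_j$, a loop-closing identification still only "eats" one elementary leaf of $p_j$ at a time, so that the homological budget $\beta$ bounds the number of such events at this stage independently of how complicated $P^{(j-1)}$ already is. Once that is established, a triangle-inequality estimate on the diameter of a leaf — at most (number of constituent leaves) $\times$ (max diameter $1$) — gives $\W(P) \leq 2\beta m + m^2 + m + 1$, and, as in the proof of Theorem~\ref{thm:waist}, plugging this into the skeleton induction yields $c(m,\beta) = \frac{1}{2\beta m + m^2 + m + 1}$.
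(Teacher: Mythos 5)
Your proposal correctly identifies the right spirit (a homological budget should bound the number of elementary leaves merged into one fiber of $P$, turning the exponential recursion into an additive count), but it is not a proof: the one step you yourself flag as ``the main obstacle'' --- that when interpolating the already-assembled $P^{(j-1)}$ against $p_j$ the mergings only cost an additive $2\beta + O(j)$ per stage --- is exactly the content of the lemma, and your stage-by-stage induction gives no mechanism for it. The difficulty is that in the chain-of-identifications argument of Lemma~\ref{lem:inter}, the ``old'' pieces appearing in a chain at stage $j$ are fibers of $P^{(j-1)}$, each of which is already a union of up to $N_{j-1}$ elementary leaves; counting chains the way that lemma does yields $N_j \le (\beta+2)N_{j-1} + (\beta+1)$, i.e.\ precisely the multiplicative blow-up you set out to avoid. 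Asserting that a loop-closing identification ``eats one elementary leaf at a time'' does not follow from the statement of Lemma~\ref{lem:parainter}, and no induction hypothesis you formulate records enough structure of the merged leaves to prove it.

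The paper avoids the induction on stages altogether and analyzes a single fiber of the final foliation directly. It sets $\Sigma_j \subset \Sigma$ to be the part covered by fibers of $p_j,\ldots,p_m$, notes that $\Sigma_j$ has at most $m-j+1$ components (connectedness of $p_j^{-1}(Z_j^{(t_j)})$ via Lemma~\ref{lem:homology-les}), and from the exact sequence $H_1(\Sigma) \to H_1(\Sigma,\Sigma_j) \to \tilde H_0(\Sigma_j)$ gets $\rk H_1(\Sigma,\Sigma_j) \le \beta + m - j$; this relative group, not $H_1(\Sigma)$, is the correct budget, and the $\tilde H_0$ correction is where the $m^2+m$ term actually comes from (you guessed it as ``something like $2j$ per stage'' without a source). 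Then, for two points in one fiber $p^{-1}(z)$, it takes a path $\alpha$ inside the fiber, records which region $\Sigma_j\setminus\Sigma_{j+1}$ it visits, and performs surgery on $\alpha$ (replacing subarcs by arcs inside $p^{-1}(z)\cap\Sigma_j$ whenever the superlevel set $I^{\ge j}$ has more than $\beta+m-j+1$ components, which a relation in $H_1(\Sigma,\Sigma_j)$ guarantees is possible) to make $\alpha$ ``nice''; the number of discontinuities is then at most $\sum_{j=1}^m 2(\beta+m-j+1) = (2\beta+m+1)m$, and the diameter of the fiber is at most that plus one. None of these devices --- the nested regions, the relative homology bound, or the path surgery keeping $\alpha$ inside the same fiber --- appears in your proposal, so the gap is genuine rather than a matter of routine bookkeeping.
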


\begin{proof}
As explained above, a generic foliation $p$ in the family $P$ is obtained by drawing fibers of $p_j$ over $Z_j^{(t_j)}$, $j=0,1,\ldots, m$. We assume that every $Z_j^{(t_j)}$ is non-empty, otherwise the result follows by induction on $m$.
Denote by $\Sigma_j$ the closed subset of $\Sigma$ covered (in the foliation $p$) by the fibers of $p_j, \ldots, p_m$ (in particular, $\Sigma_0 = \Sigma$). Notice that for $1 \le j \le m$, $\Sigma_j$ consists of at most $m-j+1$ connected components, since each set $p_j^{-1} (Z_{j}^{(t_{j})})$ is connected by Lemma~\ref{lem:homology-les}. From the long exact sequence
\[
\ldots \to  H_1(\Sigma) \to  H_1(\Sigma, \Sigma_j) \to \tilde H_0(\Sigma_j) \to \ldots
\]
one finds that $\rk H_1(\Sigma, \Sigma_j) \le \rk H_1(\Sigma) + \rk \tilde H_0(\Sigma_j) \le \beta + m-j$.

We need to bound the number of fibers in a merged chain. Fix two points $x,y \in \Sigma$ in a single fiber $p^{-1}(z)$, and connect them by a path $\alpha : [0,1] \to \Sigma$ inside this fiber. For each $t$, notice which of the regions $\Sigma_j \setminus \Sigma_{j+1}$ the point $\alpha(t)$ belongs to, and write down the corresponding index $J(t)$ (here $\Sigma_{m+1}$ is assumed empty). We have a piecewise constant function $J: [0,1] \to \{0,1,\ldots,m\}$. Denote the number of its discontinuities by $D$; without loss of generality, $D$ is finite. Note that $\dist(x,y) \le D+1$. We will transform $\alpha$ (while keeping it inside the same fiber of $p$, and fixing its endpoints $x,y$) to achieve $D \le (2\beta + m + 1) m$. Consider the following property, which $\alpha$ may or may not enjoy.

\textit{Desired property.} For $1 \le j \le m$, we say that a path $\alpha$ is \emph{$j$-nice} if the superlevel set $I^{\ge j} = \{t \in [0,1] ~\vert~ J(t) \ge j\}$ consists of at most $\beta + m-j+1$ components. We say that $\alpha$ is \emph{nice} if it $j$-nice for all $1 \le j \le m$.

Suppose first $\alpha$ is not nice, and take the smallest index $j$ such that $\alpha$ is not $j$-nice. Mark a point in each component of $I^{\ge j}$, so that we have marked points $t_1, \ldots, t_k$, $k > \beta + m-j+1$. Each arc $\alpha([t_i, t_{i+1}])$ represents an element of $H_1(\Sigma, \Sigma_j)$. Recall that $\rk H_1(\Sigma, \Sigma_j) \le \beta + m-j$. It follows that some two points $\alpha(t_i)$, $\alpha(t_{i'})$ can be connected inside $p^{-1}(z) \cap \Sigma_j$. Replace $\alpha([t_i, t_{i'}])$ with this new curve. We decreased the number of components of $I^{\ge j}$. Proceeding in the same fashion, we can make $\alpha$ $j$-nice. Repeating this procedure for larger $j$ if needed, we make $\alpha$ nice.

Now that $\alpha$ is nice, we bound its number $D$ of discontinuities. Clearly, $D$ is bounded by the total number of the endpoints of all $I^{\ge j}$. Since $\alpha$ is nice,
\[
  D \le \sum_{j=1}^{m} 2 (\beta + m-j+1) = (2\beta + m + 1) m.
\]
\end{proof}
%
%This analysis shows that the constant $c$ in Theorem~\ref{thm:waist} can be taken equal $\frac{1}{2\beta m + m^2 + m + 1}$.
We remark that the improved bound still does not seem sharp. In Gromov's example (example~\ref{cl:A} of the introduction) the dependence on $\beta$ is of order $\beta^{-1/3}$ while our bound only guarantees $\beta^{-1}$.

\begin{remark}
The proof of Theorem~\ref{thm:waist} together with the estimate of Lemma~\ref{lem:intersimplex} hold with $\tc(\cdot)$ replaced by $\tc'(\cdot)$. Indeed, in the proof of Lemma~\ref{lem:intersimplex}, to each connected component of $p^{-1}(z) \cap \Sigma_j$ that $\alpha$ meets, one can assign a class in $H^1(\Sigma)$ in a way so that their products vanish, and their linear dependencies form at most $(m-j+1)$-dimensional space (since they all arise from $0$-cochains that are characteristic functions of the connected components of $\Sigma_j$).
\end{remark}

\section{Fibered manifolds with topologically trivial fibers of small width}
\label{sec:sasha}

The following result generalizes example~\ref{cl:D} from the introduction.

\begin{theorem}
\label{thm:bundle}
For any non-negative integers $m$, $k$, and any $\eps > 0$, there exists a map $X \to Y$ such that
\begin{itemize}
  \item $X = F \times Y$, and the map is the trivial fiber bundle $F \times Y \to Y$;
  \item $Y$ and $F$ are closed topological balls of dimensions $m$ and $mk+m+k$, respectively;
  \item $X$ is endowed with a riemannian metric with $\UW_{n-1}(X) \ge 1$, where $n = \dim X = mk+2m+k$;
  \item for each $y \in Y$, the fiber $X_y \simeq F$ has $\UW_{k+m}(X_y) < \eps$.
\end{itemize}
\end{theorem}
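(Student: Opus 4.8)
\emph{Setup and construction.} The plan is to take $F=B^{N}$ with $N=(m+1)(k+1)-1=mk+m+k$, equipped with the local join structure of Definition~\ref{def:join}: a join map $\tau:F\to\triangle^{m}$, the associated $k$-dimensional complexes $Z_{0},\dots,Z_{m}$, and the retractions $\pi_{i}:F\setminus Z_{i}^{\vee}\to Z_{i}$, which move points by less than the mesh parameter $\eps_{0}$. Set $Y=\triangle^{m}$, so that $X=F\times Y$ is a topological ball of dimension $N+m=n$, and let $f:X\to Y$ be the projection; its fibers $X_{y}=F\times\{y\}$ are topological $N$-balls. Identify $Y$ with $\triangle^{m}$ and let $\widehat\Sigma=\{(x,\tau(x)):x\in F\}\subset X$ be the graph of $\tau$, a topological $N$-ball whose normal bundle in $X$ is the trivial $m$-plane bundle; fix an identification of a tubular neighborhood of $\widehat\Sigma$ with $\widehat\Sigma\times B^{m}_{\delta}$. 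The riemannian metric $g$ on $X$ is built so that: (i) on this normal neighborhood it is the \emph{product} of the euclidean metric of $F$ (transported to $\widehat\Sigma$ by $x\mapsto(x,\tau(x))$, ignoring the $\tau$-coordinate) and $L^{2}$ times the standard metric of the normal disk, for a large constant $L$; (ii) outside a slightly larger neighborhood of $\widehat\Sigma$ the metric is multiplied by a tiny constant; (iii) the two regimes are interpolated smoothly. (Since $\tau$ is only PL one first replaces the relevant data by smooth approximations, or argues with a riemannian polyhedral metric; this is routine.)

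\emph{Lower bound $\UW_{n-1}(X)\ge 1$.} This is the easy half. By construction, a region deep inside the normal neighborhood of $\widehat\Sigma$, with the metric restricted from $X$, is isometric to $B^{N}_{\mathrm{eucl}}\times B^{m}_{L\delta}$; since $L\delta$ may be taken as large as we wish and $B^{N}_{\mathrm{eucl}}$ contains a euclidean cube of side $2/\sqrt N$, this region contains a euclidean cube $[0,1]^{n}$ after rescaling the whole metric of $X$ by $\sqrt N/2$ (which only forces us to take $\eps$ smaller). Urysohn width is monotone under passing to subsets with the restricted metric, and $\UW_{n-1}([0,1]^{n})\ge 1$ by Lebesgue's covering theorem (cf. Remark~\ref{rem:ballwidth}), so $\UW_{n-1}(X)\ge 1$. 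The one point to check is that detours through the tiny ``outside'' cannot shortcut geodesics joining points of the cube; this is arranged by placing the cube far from the interpolation collar.

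\emph{Fiber bound $\UW_{k+m}(X_{y})<\eps$.} Fix $y\in\triangle^{m}$ and choose an index $i$ with $y_{i}$ maximal, so $y_{i}\ge\frac{1}{m+1}$; then for $\delta$ small the ball $B_{\delta}(y)\cap\triangle^{m}$ lies in $\{t_{i}>0\}$. Put $T_{y}=\bigl(\tau^{-1}(B_{\delta}(y))\bigr)\times\{y\}\subset X_{y}$; since $\pi_{i}$ is defined on $\tau^{-1}(\{t_{i}>0\})$, the map $(\tau,\pi_{i}):T_{y}\to\bigl(B_{\delta}(y)\bigr)\times Z_{i}$ is defined and lands in a space of covering dimension $\le m+k$. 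Its fibers are the $\pi_{i}$-fibers inside the $\tau$-fibers $\tau^{-1}(t)$; these lie in the $\tau$-\emph{vertical} directions, which are not enlarged by the metric modification, so they have diameter $<2\eps_{0}$. Hence $\UW_{k+m}(T_{y})<2\eps_{0}$. Finally $X_{y}$ retracts onto $T_{y}$ (lift a radial retraction of $\triangle^{m}\setminus \mathring B_{\delta}(y)$ onto its inner boundary, using Lemma~\ref{lem:plbundle}) with displacement at most the intrinsic diameter of $X_{y}\setminus T_{y}$, which the squeezing in (ii) makes arbitrarily small; therefore $\UW_{k+m}(X_{y})\le\UW_{k+m}(T_{y})+2\cdot(\text{tiny})<\eps$ once $\eps_{0}$ and the squeezing constant are chosen small enough. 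The degenerate cases $m=0$ or $k=0$ are handled directly (take $X=F=B^{\max(m,k)}$, respectively $X=B^{m}\times B^{m}$, rescaled).

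\emph{Main obstacle.} The crux is that a single metric must satisfy two competing demands: the enlargement normal to $\widehat\Sigma$ has to fatten $X$ all the way up to dimension $n$, while the $\tau$-vertical directions — which carry the topological complexity of the $\tau$-fibers — must stay bounded so that the $\pi_{i}$'s collapse them with error $<\eps$, and the exterior must be compressed enough to be negligible inside every fiber yet not so much as to create a geodesic shortcut across the fat cube. Verifying that the interpolation (iii) can be performed so that all three features survive simultaneously — together with the supporting claims that the exterior of $X_{y}$ genuinely retracts onto $T_{y}$ with small displacement and that the normal neighborhood contains the stated euclidean cube without shortcuts — is where essentially all of the work lies; it requires no ideas beyond careful bookkeeping with the local join structure of Definition~\ref{def:join}.
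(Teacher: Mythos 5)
Your overall construction is the paper's: put the local join structure on the fiber factor, fatten the metric in the directions normal to the graph $\widehat\Sigma$ of $\tau$, squeeze it outside, get the lower bound from a fat euclidean region inside the tube, and treat the fibers via $(\tau,\pi_i)$. The genuine gap is in the step that promotes the bound on $T_y=\tau^{-1}(B_\delta(y))$ to a bound on all of $X_y$. There is no retraction $X_y\to T_y$ at all: $X_y\simeq B^{N}$ has trivial reduced homology, whereas $T_y$ is a regular neighborhood of a level set of the join map, which is exactly the topologically complicated object driving the whole construction --- already for $m=k=1$ it is a thickened high-genus surface with $H_1\neq 0$ (and for $k=0$ it is disconnected). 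A retract of a ball must have trivial reduced homology (respectively, be connected), so no continuous retraction exists, and in particular it cannot be produced by ``lifting'' the radial retraction of $\triangle^m\setminus\mathring B_\delta(y)$: $\tau$ is nowhere near a fibration over that region, and Lemma~\ref{lem:plbundle} gives no such lift. Independently, the quantitative claim backing the step --- that the squeezing makes the diameter of $X_y\setminus T_y$ arbitrarily small --- fails for $m=1$: the fattened tube around $\widehat\Sigma$ separates $X$ into two sides, both of which meet $X_y\setminus T_y$ (e.g.\ the points over $Z_0$ and over $Z_1$), and any path joining them must cross the tube at normal cost of order $L\delta\gg 1$; so even collapsing $X_y\setminus T_y$ to a single point of the target would create a fiber of diameter $\gg\eps$. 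The paper avoids both problems by not retracting and not collapsing the whole exterior: it uses the globally continuous map $x\mapsto(\pi_i(x),\tau(x))$ into $(Z_i\times\triangle^m)/(Z_i\times v_i^\vee)$, with $i$ chosen so that the facet $v_i^\vee$ is far from $y$; only the slice $Z_i\times v_i^\vee$ is collapsed, its preimage $Z_i^\vee$ sits on a single $\tau$-level inside the squeezed zone, continuity at $Z_i^\vee$ is automatic in the quotient, and no fiber of this map ever contains points on opposite sides of the tube. To repair your argument you would have to replace the retraction step by (a version of) this map.

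A second, more minor issue: taking $Y=\triangle^m$ on the nose, the tubular neighborhood of $\widehat\Sigma$ is not a product $\widehat\Sigma\times B^m_\delta$ inside $X$. Every top simplex of the $\eps_0$-fine triangulation of $F$ contains points $x$ with $\tau(x)\in\partial\triangle^m$, and over such $x$ the normal disk pokes out of $X=F\times\triangle^m$; since your cube's $F$-footprint has side $2/\sqrt N\gg\eps_0$, it contains such points, so the region you claim is isometric to $B^{N}\times B^m_{L\delta}$ is really that product with thin slits removed over a spread-out set of $F$-locations, and a slitted cube need not have $(n-1)$-width $\ge 1$. This is easy to fix by taking $Y$ to be a larger $m$-ball containing a neighborhood of $\triangle^m$ (as the paper does, cutting the $Y$-factor to a ball strictly larger than the simplex), but as written the lower bound does not follow.
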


\begin{remark}
\label{rem:bundle}
Consider the trivial bundle $X' = F' \times Y' \to Y'$, where $Y'$ is the euclidean $m$-ball of radius $\sim \eps$, and $F'$ is the euclidean $(mk+m+k)$-ball of radius $\sim \eps$. The bundle $X$ in the theorem will be constructed in a way so that near its boundary $X$ will look exactly like $X'$. This allows to modify the construction to make $X$ a closed manifold (e.g., a sphere or a torus), or to take the connected sum with other fibrations, etc.
\end{remark}

\begin{proof}
To start with, take $Y = \mathbb{R}^m$, $F = \mathbb{R}^{mk+m+k}$, $X = F \times Y = \mathbb{R}^{mk+2m+k}$, and ignore for the moment that they are not closed balls. Let $p : X \to Y$ and $p_F : X \to F$ be the projection maps. We start from the euclidean metric on $X$, modify it, and then cut $X$ to make it compact. Then the (restricted) map $p$ will be the one we are looking for.

On the first factor $F=\mathbb{R}^{mk+m+k}$, consider the structure of the $\eps$-local join of $k$-dimensional complexes $Z_0, \ldots, Z_m$ in the sense of Definition~\ref{def:join}. The construction is based on the idea of blowing up the metric in between the $Z_i$ (cf.~\cite[Section~4]{balitskiy2020local}, where a similar idea is used). Let $\tau: F \to \triangle^m$ be the join map. We think of $\triangle^m$ as sitting in $\mathbb{R}^m$ with the center at the origin, scaled so that the inradius of $\triangle^m$ equals $3$. %Let $\tau'$ be the composition $X = F \times Y \to F \overset{\tau}\to \triangle^m \hookrightarrow Y$.
Consider the ``perturbation of the projection via the join map''
\[
p^\tau : X \to Y, \quad p^\tau = p - \tau \circ p_F.
\]

It will be useful to look at $X$ in the coordinates $\Phi = (p_F, p^\tau)$. Namely, $\Phi: X \to X$ is the map given by $\Phi(x) = (p_F(x), p^\tau(x)) \in F \times Y = X$; its inverse is given by $x\mapsto (p_F(x), p(x) + \tau\circ p_F(x))$. It follows that the fibers of $p^\tau$ are PL homeomorphic to $F$.

%One can observe that the fibers of $p^\tau$ are PL homeomorphic to $F$, and it will be useful to look at $X$ in the coordinates $\Phi = (p_F, p^\tau)$. Namely, $\Phi: X \to X$ is the map given by $\Phi(x) = (p_F(x), p^\tau(x)) \in F \times Y = X$. % $\Phi$ --- гомеоморфизм, так как у него есть обратный $x\mapsto (p_F(x), p(x) + \tau\circ p_F(x))$

Let $\phi_1 : [0,+\infty) \to \mathbb{R}$ be a monotone cut-off function that equals 1 on $[0,1]$ and $0$ on $[1.1,\infty)$. Denote by $\phi^k_r:\mathbb{R}^{k}\to \mathbb{R}$ an $r$-sized bump function $\phi^k_r(x):=\phi_1(|x|/r)$; here $|\cdot|$ is the euclidean norm in $\mathbb{R}^{k}$. Let $g_X^{\text{euc}}, g_Y^{\text{euc}}, g_F^{\text{euc}}$ be the standard metrics on the corresponding euclidean spaces, viewed as symmetric $2$-forms. To define a new metric on $X$ we take $g_X^{\text{euc}}$, blow it up transversely to $(p^\tau)^{-1}(x)$ for $x$ close to the origin of $\mathbb{R}^m$, and squeeze everywhere else. Formally,
%\[
%g_X = \phi^{n}_{5m} \left(g_X^{\text{euc}} + \widetilde{p}^* (\phi^m_2 g_Y^{\text{euc}}) \right).
%\]
\[
g_X = \Phi^* g_X', \text{  where } g_X' = \eps g_X^{\text{euc}} + (1-\eps) (\phi^{mk+m+k}_2 g_F^{\text{euc}}) \times (\phi^m_2 g_Y^{\text{euc}}).
\]
In order for this to be well-defined, one might want to approximate $\Phi$ by a smooth map. From now on, we assume that $X$ is endowed with metric $g_X$.
%The metric properties below will be proven for this metric, even though it is not exactly riemannian. To make it riemannian, one can make it fall off not to zero, but to a small multiple of $g_X^{\text{euc}}$, and smoothen. Also, to prove the theorem as stated, one needs to make $X$ compact, e.g. consider the subset $B_{6m}^{g_F^\text{euc}}(0) \times B_{6m}^{g_Y^\text{euc}}(0) \subset X$ in place of $X$. For the rest of the proof we ignore those little caveats and prove everything for $X = \mathbb{R}^n$ with $g_X$ vanishing outside $B_{5.5m}^{g_X^\text{euc}}(0) \subset X$.
To make $X$ compact, one can replace it by its subset $B_{3}^{g_F^\text{euc}}(0) \times B_{3+m}^{g_Y^\text{euc}}(0)$. Radius $3+m$ here is chosen so that the $2.2$-neighborhood of $\triangle^m$ is covered by $p(X)$. We write $X'$ for the space $\Phi(X)$ with metric $g_X'$; clearly, $X$ and $X'$ are isometric.
%Observe that the geodesic $g_X$-distance between any two points of the unit ball $B_1^{g_X^\text{euc}}(0) \subset X$ is the same when measured in the ball and in the whole space $X$; this is because in the cut-off functions equal 1 in $B_2^{g_X^\text{euc}}(0)$.

%Also, when taking a geodesic metric of the Riemannian $g_E$, some distances may end up being smaller than the formula for $g_E$ would suggest for a metric in a sense of metric spaces, but it doesn't happen in the ball $B_1(0\in E)$, since on its 1-neighborhood every involved $f$ equals 1.

Figure~\ref{fig:sasha} depicts the case $m=1, k=0$: there, $X = \mathbb{R}^2$ is sliced by lines $p^{-1}(y)$ (bold black curves in the figure), each of which is the local join of a green point set $Z_0$ and a blue point set $Z_1$. On the left, the geometry of $g_X$ is depicted by stretching $X$ along the vertical direction, so that it corresponds to the value of $p^\tau$. On the right, one sees $X$ in the coordinates $\Phi = (p_f, p^\tau)$, with the pinching in the region where $|p^\tau(x)|>2$.

%Here is a sketch of the result for the fibers being $\mathbb{R}^{\text{1 or 2}}\subset G^0_\text{blue}*G^\text{0 or 1}_\text{green}$: the fibers are squeezed everywhere when far away from the join segment $\triangle$ in the base; over its green endpoint the fibers aren't squashed around $G_\text{green}$ and are even stretched there in the direction of join parameters (but not in $G_i$-direction); over the middle points of $\triangle$ the same happens in between $G_\text{blue}$ and $G_\text{green}$; etc.

%\begin{figure}[ht]
%\label{fig:bundle}
%  \begin{minipage}{0.5\textwidth}
%        \centering
%        \includegraphics[width=1\textwidth]{v.png}
%        %\caption{123}
%  \end{minipage}\hfill
%  %\begin{minipage}{0.55\textwidth}
%%        \centering
%%        \includegraphics[width=1\textwidth]{v3.png}
%%        %\caption*{}
%%  \end{minipage}
%\begin{minipage}{0.5\textwidth}
%        \centering
%        \includegraphics[width=1\textwidth]{vs.png}
%        %\caption{456}
%  \end{minipage}
%\caption{caption}
%\end{figure}

\begin{figure}[ht]
  \centering
  \includegraphics[width=0.7\textwidth]{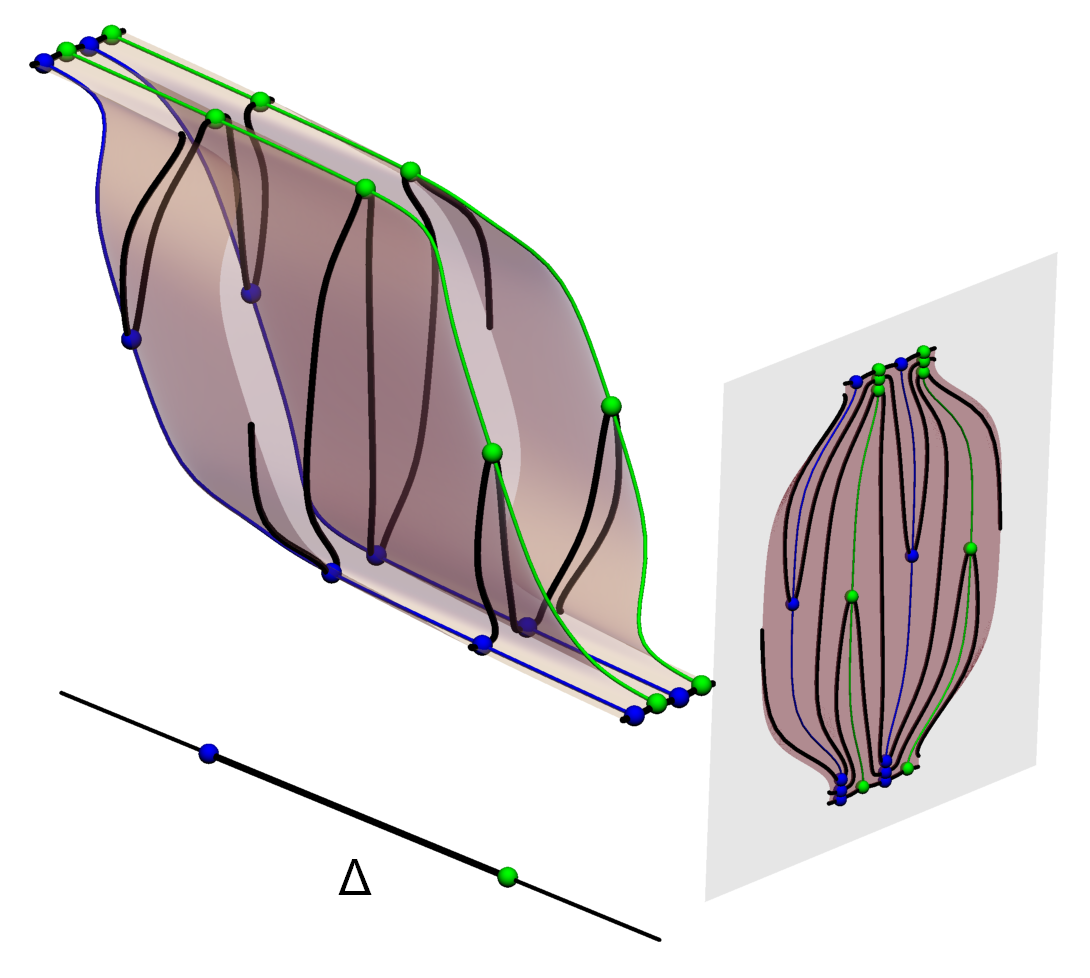}
  \caption{On the left: the map $p: X \to Y$, with $X$ stretched vertically according to the values of $p^\tau$. On the right: $X$ viewed in the coordinates $(p_f, p^\tau)$}
  \label{fig:sasha}
\end{figure}

Now let us verify the claimed properties of the metric $g_X$. To see that $\UW_{n-1}(X) \ge 1$, note that the unit ball $B_{1}^{g_X'}(0)$ is just the usual euclidean ball, and its width is $>1$.

%one can take a look at the right figure to see a broad ball in the total space. Specifically, the subset $\widetilde{p}^{-1}(B_1^{g_Y^\text{euc}}(0))$ has the metric that is the euclidean metric of $F \times \triangle^m$ when observed in coordinates $(\text{along F}, \widetilde{p})$.
Finally, we show that the fibers of $p$ have small width. %In the left figure we can see it as fibers around the blue base point being close to a ``fence'' of $\Lambda$-shapes, those around the green one to a V-shape, and intermediate being projectable onto any of them with small fibers [rewrite this sentence].
Consider a fiber $X_y = p^{-1}(y)$, $y \in Y$, and the restriction of $g_X$ on it. It equals $\eps g_F^{\text{euc}}$ plus a term supported in $\tau^{-1}(B_{2.2}^{g_Y^{\text{euc}}}(y))$. The ball $B_{2.2}^{g_Y^{\text{euc}}}(y)$ does not reach one of the faces $v_i^\vee$ of $\triangle^m$. We would like to use the retraction $\pi_i$ (as in the discussion after Definition~\ref{def:join}) to map $p^{-1}(y)$ to $Z_i$; this is not possible for the points in the dual complex $Z_i^\vee$, which is entirely contained in the squeezed zone, so we will not lose much if we just send it to a single point. Here is the map witnessing $\UW_{k+m}(X_y) \lesssim \eps$:
\begin{align*}
X_y \simeq F &\to (Z_i \times \triangle^m)/(Z_i \times v_i^\vee) \\
x &\mapsto
\begin{cases}
  (\pi_i(x), \tau(x)), & \mbox{if } x \notin Z_i^\vee \\
  \star, & \mbox{otherwise}.
\end{cases}
\end{align*}
where $\star$ denotes the pinched copy of $Z_i \times v_i^\vee$ in the quotient. The fiber of this map over~$\star$ is $\eps$-small since the metric is squeezed around $Z_i^\vee$. Consider the fiber over any other point $(z,t)$ of the quotient; since it is contained in $\tau^{-1}(t)$, its $g_X$-size does not exceed its $g_F$-size; since it is contained in $\pi_i^{-1}(z)$, its $g_F$-size is $\eps$-small.
%So the only time the $Z_i$-th coordinate isn't determined in $\mathbb{R}^{n(k+1)-1}\subset *_i G_i$, and we cannot project to the $G^k_f$, is when the metric is squashed anyway, so we can might as well send it to a single point. Thus we consider the following mapping:
%$$
%C_f:\mathbb{R}^{n(k+1)-1} \to (G_f \times \triangle^{n-1})/(G_f \times \text{Face}_f).
%$$
%Every fiber here is either the $C^{-1}(\text{Face}_f)$, has its metric squashed and has its diameter small; or the fiber has a well-defined $G_f$-coordinate and is therefore within an $\eps$-range of the corresponding point of $G_f$ in $\mathbb{R}^{n(k+1)-1}$. That means it has $O(n)\eps$ diameter, since on the fiber $\Delta^{-1}(x)$ the metric is not longer than on the same subset of $\mathbb{R}^{n(k+1)-1}$ and there the claimed bound clearly holds.
\end{proof}

\bibliography{ref}
\bibliographystyle{abbrv}
\end{document}